\documentclass[11pt]{article}

\usepackage{graphicx}
    \graphicspath{ {figures/} }
    \usepackage[margin=0.1cm]{caption}
    \usepackage{float}
    \pdfminorversion=7 
\usepackage{amsmath}
\usepackage{amsthm}
\usepackage{amssymb}
\usepackage[mathscr]{eucal}
\usepackage[all]{xy}
\usepackage{epsfig}
\usepackage{epstopdf}
    \epstopdfsetup{suffix=,}
\usepackage{amscd}
\usepackage[normalem]{ulem}
\usepackage{enumitem}
    \setlist[enumerate,1]{label=\textnormal{(\alph*)}}
\usepackage{thmtools, thm-restate}
\usepackage{array,ragged2e}

\usepackage[left=1.25in,right=1.25in,top=1.25in,bottom=1.25in,
            ]{geometry}
\usepackage{hyperref}
\usepackage{color}

\theoremstyle{plain}
\newtheorem{theorem}{Theorem}

\newtheorem{lemma}[theorem]{Lemma}

\theoremstyle{remark}

\theoremstyle{definition}
\newtheorem{question}{Question}
\newtheorem*{question*}{Question}





\newcommand{\mcc}{m_{c}}
\newcommand{\tcc}{t_{c}}




\makeatletter
\newcommand\xleftrightarrow[2][]{%
  \ext@arrow 9999{\longleftrightarrowfill@}{#1}{#2}}
\newcommand\longleftrightarrowfill@{%
  \arrowfill@\leftarrow\relbar\rightarrow}
\makeatother

\usepackage{authblk}
\title{Knot Mosaics with Corner Connection Tiles \footnote{Mathematics Subject Classifications: 57K10}}
\author[]{Aaron Heap, Una Donovan, Riley Grossman, Nickolas Laine, Connor McDermott, Marcus Paone, and Drew Southcott}
\date{}                     
\setcounter{Maxaffil}{0}



\begin{document}

\maketitle
\begin{abstract}
A knot mosaic is a representation of a knot or link on a square grid using a collection of tiles that are either blank or contain a portion of the knot diagram. Traditionally, a piece of the knot on one tile connects to a piece of the knot on an adjacent tile at a connection point that is located at the midpoint of a tile edge. We introduce a new set of tiles in which the connection points are located at corners of the tile. By doing this, we can create more efficient knot mosaics for knots with small crossing number. In particular, when using these corner connection tiles, it is possible to create knot mosaic diagrams for all knots with crossing number 8 or less on a mosaic that is no larger and uses fewer non-blank tiles than is possible using the traditional tiles.
\end{abstract}


\section{Introduction}\label{section:introduction}

Lomonaco and Kauffman introduced the study of knot mosaics as a system that provides a blueprint for constructing a physical quantum knot system \cite{Lom-Kauff}. They conjectured that their formal knot mosaic system completely captured the entire structure of tame knot theory. This was proven to be true by Kuriya and Shehab, when they proved that knot mosaic theory was equivalent to tame knot theory \cite{Kuriya}.

A \emph{knot mosaic} is a two-dimensional square array representation of a knot (or link) made up of a finite set of tiles. Traditionally, the tiles used for constructing these mosaics have been those shown in Figure \ref{fig:tiles-traditional}. The first tile is a blank tile, and we refer to the other tiles collectively as non-blank tiles. Each non-blank tile displays a piece of the knot with an endpoint located at the midpoint of a tile edge. This endpoint is called a \emph{connection point} of the tile. In order to be \emph{suitably connected}, each connection point on a tile must meet a connection point on an adjacent tile.

\begin{figure}[ht]
  \centering
  \includegraphics{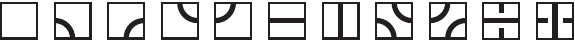}\\
  \caption{Collection of tiles used to construct knot mosaics.}
  \label{fig:tiles-traditional}
\end{figure}

An \emph{$n\times n$ knot mosaic}, or \emph{$n$-mosaic}, is a knot mosaic with $n$ rows and $n$ columns. Some examples of knot mosaics are shown in Figure \ref{fig:examples-traditional}, where we see a 4-mosaic, three 5-mosaics, and a 6-mosaic. Much work has been done to find efficient ways of depicting a particular knot $K$ as a knot mosaic. What is the smallest $n$ such that $K$ can fit on an $n$-mosaic? This is the \emph{mosaic number} of $K$, denoted $m(K)$. What is the fewest number of non-blank tiles needed to depict $K$? This is the \emph{tile number} of $K$, denoted $t(K)$. These efficiency questions have been answered for every prime knot with crossing number 10 or less, and every knot with mosaic number 6 or less has been determined. The reader can find details in \cite{Heap4,Heap2,Heap3,Kuriya,Lee2}. Each of the mosaics given in Figure \ref{fig:examples-traditional} are the most space-efficient knot mosaics possible for the given knots, which means both the mosaic number and tile number are realized. A knot mosaic is said to be \emph{space-efficient} if the number of non-blank tiles is as small as possible and cannot be made to fit on a smaller mosaic. However, it may be possible to decrease the number of non-blank tiles by increasing the size of the mosaic.

\begin{figure}[ht]
  \centering
  \includegraphics{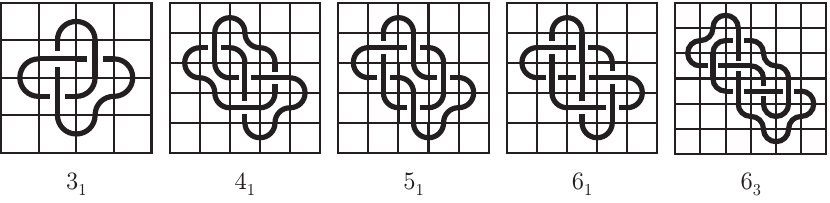}\\
  \caption{Examples of knot mosaics using traditional tiles.}
  \label{fig:examples-traditional}
\end{figure}

One could consider creating mosaics using other types of tiles. For example, there has been some exploration of mosaics made using hexagonal tiles, as seen in \cite{Hex-tiles-BushComminsGomezMcLoud-Mann} and \cite{Hex-tiles-Howards}. In this paper, we introduce a set of square tiles, labeled $T_0$ to $T_{10}$ and shown in Figure \ref{fig:tiles-new}, that can be used to construct knot mosaics but are different than the original set of square tiles given in Figure \ref{fig:tiles-traditional}. Instead of having the connection point located at the midpoint of a tile edge, these new tiles have connection points located at the corners of the tiles. We refer to these tiles as \emph{corner connection tiles}. The first tile $T_0$ is blank, the next four tiles $T_1$ to $T_4$ are \emph{single-arc tiles}, $T_5$ and $T_6$ are \emph{diagonal line segment tiles}, $T_7$ and $T_8$ are \emph{double-arc tiles}, and $T_9$ and $T_{10}$ are \emph{crossing tiles}.

\begin{figure}[ht]
  \centering
  \includegraphics{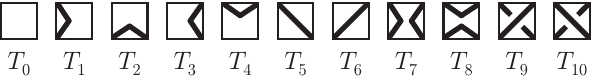}\\
  \caption{Tiles with corner connection points.}
  \label{fig:tiles-new}
\end{figure}

In Figure \ref{fig:examples-new} we see examples of knot mosaics created using these new tiles. Notice that in each case, the knots depicted can be created more efficiently with these new tiles than they could with the traditional tiles. For example, the unknot has mosaic number 2 and tile number 4 when using traditional tiles, but with the new tiles we can create the unknot on a 2-mosaic using only two non-blank tiles. Similarly, the trefoil knot $3_1$ has mosaic number 4 and tile number 12, but with the corner connection tiles the trefoil can be constructed on a 3-mosaic using only 8 non-blank tiles. The efficiency advantage comes from the fact that, unlike with traditional tiles, using corner connection tiles allows for crossing tiles to be placed in the outermost rows and columns of the mosaic.

\begin{figure}[ht]
  \centering
  \includegraphics[scale=.8]{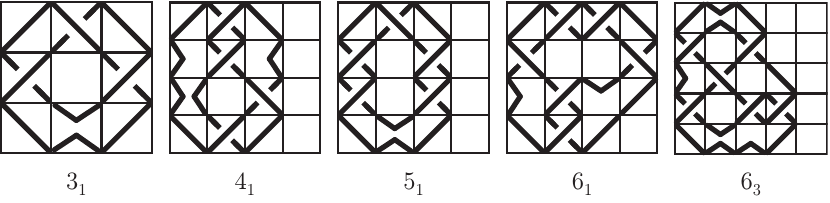}\\
  \caption{Examples of knot mosaics using the new tiles.}
  \label{fig:examples-new}
\end{figure}

We note that the knot $6_1$, the fourth mosaic in Figure \ref{fig:examples-traditional}, is an example of a knot where the crossing number cannot be realized on a minimal knot mosaic when using the traditional tiles. In fact, Ludwig, Evans, and Paat \cite{Ludwig} found an infinite family of knots exhibiting this phenomenon. When using the corner connection tiles, we will see below that this issue does not occur for any prime knot with crossing number 8 or less, and it is not yet known if there are any knots whose crossing number cannot be realized on a minimal mosaic.

If we let $K$ be a knot, we define the \emph{corner connection mosaic number} of $K$, denoted $\mcc(K)$, to be the smallest positive integer $n$ such that $K$ can fit on an $n$-mosaic using corner connection tiles. We similarly define the \emph{corner connection tile number} of $K$, denoted $\tcc(K)$, to be the fewest number of non-blank tiles needed to depict $K$ on a knot mosaic using corner connection tiles. In Section \ref{section:layouts}, we determine the corner connection mosaic number for all prime knots with crossing number 8 or less. We also determine the corner connection tile number for many of these.

In each of the examples given in Figures \ref{fig:examples-traditional} and \ref{fig:examples-new}, we see that $\mcc(K) \leq m(K)$ and $\tcc(K) \leq t(K)$. A natural question then arises. Is it always true that the traditional mosaic number and tile number will be greater than or equal to the corner connection mosaic number and corner connection tile number? In Section \ref{section:max-crossing}, we will see that the answer is no. Although more efficiency is obtained with these new tiles while the mosaic size is small, this is not always the case. As we will see, constraining the number of crossings and examining where they can be located will be a major tool for showing this. In Section \ref{section:rectangular}, we briefly explore the concept of a rectangular mosaic and provide a maximum for the number of crossing tiles allowed, and in Section \ref{section:questions} we provide a collection of open questions that readers may wish to explore.

\section{Knot Mosaics for Prime Knots}\label{section:layouts}

Our initial goal is to find space-efficient knot mosaic layouts for prime knots using the corner connection tiles. In order for the knot diagram to be reduced and the knot mosaic to be space-efficient, we must avoid unnecessary crossings caused by kinks in the knot diagram and we attempt to minimize the number of non-blank tiles used in the mosaic. In other words, we try to avoid unnecessary meandering through the mosaic. We will also avoid multi-component links in this section, as our focus will be strictly limited to prime knots.

In many of the figures that follow, we will make use of nondeterministic tiles such as a crossing tile without an indication of over- or under-crossing or a four-connection-point tile which could be a crossing or double-arc tile $T_7$, $T_8$, $T_9$, or $T_{10}$. The nondeterministic tiles with dashed segments could be blank tiles or non-blank tiles using any of the indicated connection points, but they could also indicate that a tile location is optional. Examples are shown in Figure \ref{fig:tiles-nondeterministic}. The third tile indicates that $T_0$, $T_1$, $T_4$, or $T_6$ can be used in its place or no tile at all. In particular, this tile indicates that the lower-right connection point cannot be used. The fourth tile indicates that the upper-left connection point must be used, which includes $T_1$, $T_4$, $T_5$, $T_7$, $T_8$, $T_9$, or $T_{10}$. The fifth tile indicates that this location could be any tile or no tile at all.

\begin{figure}[ht]
  \centering
  \includegraphics{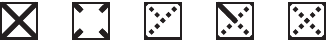}\\
  \caption{Examples of nondeterministic tiles.}
  \label{fig:tiles-nondeterministic}
\end{figure}

In traditional knot theory, we use planar isotopy moves to deform knot diagrams into different but equivalent knot diagrams. Analogously, we can move parts of the knot within a mosaic using \emph{mosaic planar isotopy moves}, replacing one set of tiles with another, to obtain another knot mosaic diagram that does not change the knot type depicted in the mosaic. A complete list of all of the moves when using the traditional mosaic tiles is given in \cite{Kuriya} and \cite{Lom-Kauff}. We provide a few examples of these mosaic planar isotopy moves when using corner connection tiles in Figure \ref{fig:isotopies}.

\begin{figure}[ht]
  \centering
  \includegraphics[width=\linewidth]{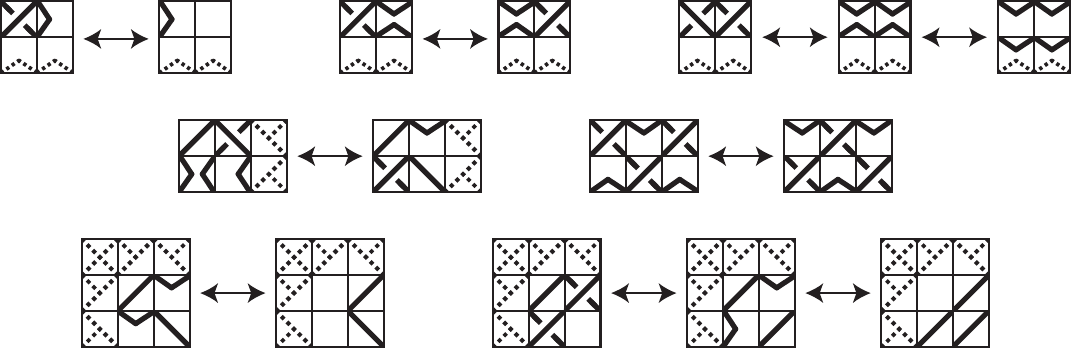}\\
  \caption{Examples of mosaic planar isotopy moves using corner connection tiles.}
  \label{fig:isotopies}
\end{figure}

As we determine what mosaic layouts are possible using the corner connection tiles, we point out that the corner tiles of the knot mosaic cannot be a crossing or double-arc tile. Because of this, a 2-mosaic cannot have a crossing, and the unknot is the only knot with mosaic number 2, using either traditional tiles or the corner connection tiles. In fact, the only possibilities for the corner position of a space-efficient mosaic are a blank tile or a tile with a diagonal line segment, and we generalize this further in the following lemma.

\begin{lemma}\label{lemma:start-end-tiles}
For a space-efficient knot mosaic created from corner connection tiles, we may assume the first non-blank tile in the first non-blank row is a diagonal line segment tile $T_6$. We may also assume that the last non-blank tile in this row is $T_5$ and that there is at least one non-blank tile between the $T_5$ and $T_6$ tiles. Similarly, we may assume the last non-blank row and the first and last non-blank columns begin and end with diagonal line segment tiles $T_5$ or $T_6$ with at least one tile between them.
\end{lemma}


\begin{proof}
  We can assume that the top row of the mosaic is not blank by shifting all tiles up, if necessary. Consider the first non-blank tile (from left to right) in this first row. It clearly cannot have four connection points, otherwise the connection point in the top, left corner of this tile would not be suitably connected. So this first tile must have two connection points, meaning it is a single-arc tile ($T_2$ or $T_3$) or a diagonal line segment ($T_6$). If the first non-blank tile is a single-arc tile $T_3$ with a connection point that touches the top of the mosaic, then this connection point must suitably-connect with the adjacent tile to the right. As we see in the first part of Figure \ref{fig:corner-tile-single-arc}, this will always lead to an unlinked component, a kink, or space-inefficiency. We get a similar result if it is a single-arc tile $T_2$ that is in the corner of the mosaic.

  \begin{figure}[ht]
  \centering
  \includegraphics[width=\linewidth]{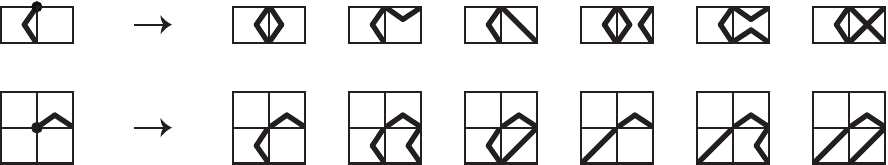}\\
  \caption{A single arc tile as the first non-blank tile of the first row results in an unknotted component, space-inefficiency, or a mosaic that can be modified via planar isotopy to remove it from the first row.}
  \label{fig:corner-tile-single-arc}
\end{figure}

  If the first non-blank tile is a single-arc tile $T_2$ that is not in the corner (i.e. has a blank tile in the preceding column), then the left connection point must suitably-connect with the tile directly below it or with the tile diagonally below to the left. If it is the tile directly below, the result will be symmetrically equivalent to those in the first part of Figure \ref{fig:corner-tile-single-arc}. If it is connected to the tile below and to the left, then the tile directly below either is blank or is non-blank with inefficiency or unlinked components, as shown in the second part of Figure \ref{fig:corner-tile-single-arc}. In the case that the tile below is blank, then the mosaic is not space-efficient or the single-arc tile in question can be flipped down, removing it from the first row. Therefore, we may assume that the first non-blank tile in the first row is a diagonal line segment $T_6$. By symmetry, we are able to conclude that the last non-blank tile of this row is a diagonal line segment $T_5$. If the $T_6$ tile is immediately followed by the $T_5$ tile, the result is either a link or the mosaic is not space-efficient, as seen in Figure \ref{fig:first-row-tiles}. By symmetry, we are able to reach an analogous conclusion for the first and last non-blank row or column.
\begin{figure}[ht]
  \centering
  \includegraphics[width=\linewidth]{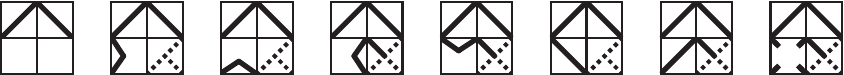}\\
  \caption{The mosaic is not space-efficient if there is no tile between the $T_6$ and $T_5$ tiles in the first row.}
  \label{fig:first-row-tiles}
\end{figure}
\end{proof}

\begin{lemma}\label{lemma:2by2}
  Any $2 \times 2$ sub-mosaic of a knot mosaic contains at most two tiles with four connection points $T_7$, $T_8$, $T_9$, or $T_{10}$. In particular, there can only be two crossing tiles in any $2 \times 2$ sub-mosaic.
\end{lemma}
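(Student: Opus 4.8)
The plan is to localize the entire argument at the single lattice point shared by all four tiles of the sub-mosaic. Place the $2 \times 2$ sub-mosaic so that its four unit tiles meet at a common central vertex $c$; each of the four tiles has exactly one of its four corners located at $c$. The first step is to record the simple but crucial observation that a tile with four connection points ($T_7$, $T_8$, $T_9$, or $T_{10}$) carries a connection point at every one of its corners, and in particular at $c$. Consequently, if $k$ of the four tiles in the sub-mosaic are four-connection-point tiles, then at least $k$ distinct connection points, one contributed by each such tile, are located at $c$.

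The heart of the proof is then a purely local claim: in any suitably connected mosaic, at most two connection points may coincide at a single grid vertex. I would establish this by appealing to the fact that the underlying knot diagram is an immersed closed curve whose only double points are its crossings, and that every crossing occurs in the interior of a crossing tile, never at a corner. Hence in a neighborhood of a grid vertex the diagram is a smoothly embedded arc, through which at most one strand can pass; since a single strand passing through $c$ accounts for exactly two of the connection-point slots at $c$, no more than two connection points can be present. In particular, three connection points at $c$ would be an odd, hence unpairable, number of strand ends, while four connection points would force two strands to pass through the single point $c$, creating a multiple point that is neither a smooth point nor a permissible crossing.

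Combining the two steps gives $k \le 2$ immediately, which is the assertion of the lemma; and since the crossing tiles $T_9$ and $T_{10}$ are among the four-connection-point tiles, the final sentence about crossings follows at once. I expect the main obstacle to be the rigorous justification of the local claim, because connection points reaching $c$ from the different tile types arrive with different tangent directions, along a diagonal for $T_5$, $T_6$, $T_9$, $T_{10}$ but transverse to an edge for the arc tiles, so one must rule out the possibility that four incoming strand ends could be paired into two \emph{disjoint} smooth passages. The resolution is that any two passages through the common point $c$ necessarily intersect at $c$ itself, producing either a tangency or an unrecorded crossing; verifying that every pairing of the four corner strands falls into one of these two forbidden cases, independent of the tile types involved, is the one place where a short case check on tangent directions is genuinely needed.
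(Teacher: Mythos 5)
Your proposal is correct and follows essentially the same route as the paper: both arguments localize at the single corner shared by all four tiles of the $2\times 2$ sub-mosaic, observe that each four-connection-point tile would contribute a connection point there, and conclude that three or more arcs meeting at one grid vertex is impossible in a knot diagram. Your version simply spells out in more detail (the parity obstruction for three, the illegal multiple point for four) what the paper compresses into one sentence.
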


\begin{proof}
In any $2\times 2$ sub-mosaic of a knot mosaic, all four tiles share a corner. This corner can only be a connection point for at most two of the tiles, otherwise the resulting mosaic would not depict a knot, as it would have three or more arcs coming together at a single point. Therefore, at least two of the tiles cannot have a connection point in this location.
\end{proof}


These lemmas make determining the layouts for a 3-mosaic simple. There are five possible positions to place crossing tiles. Furthermore, knowing that the unknot can be placed on a 2-mosaic, we can assume that the knot mosaic must have at least three crossing tiles. None of the corner tiles of the mosaic can be blank because this would prevent a crossing tile from being in either position adjacent to the corner, which would leave only three possible locations for the crossings in a $2\times 2$ sub-mosaic, and that would result in a violation of Lemma \ref{lemma:2by2}. So the corner tiles must be diagonal line segments. If there is a crossing in the center tile of the mosaic, then Lemma \ref{lemma:2by2} forces all three crossings to be in the middle column or middle row. If the center tile is not a crossing, then at least three of the four remaining boundary tiles must be crossings. Up to symmetry, the only possible layouts for a space-efficient 3-mosaic are those shown in Figure \ref{fig:3mosaic-layouts}. The first has 8 non-blank tiles, and the second has 9 non-blank tiles.

\begin{figure}[ht]
  \centering
  \includegraphics{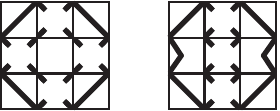}\\
  \caption{Possible space-efficient layouts for a 3-mosaic.}
  \label{fig:3mosaic-layouts}
\end{figure}

If $K$ is a nontrivial knot that can be created on one of these layouts, then we know the corner connection mosaic number of $K$ is 3. However, without doing a little more work (see Lemma \ref{lemma:minimal-nonblank-tiles}), one can only conclude that the corner connection tile number is no more than 8 or 9, respectively. When creating knot mosaics using traditional tiles, the tile number is not always realized on the smallest possible mosaic (see \cite{Heap2} for details). We have no reason to believe that such a phenomenon does not occur with the corner connection tiles, and we will see below in Theorems \ref{thm:4mosaic-knots} and \ref{thm:tile-number-not-minimal} that, in fact, it does occur.

Using traditional tiles, bounds for the tile number on an $n$-mosaic were determined in \cite{Heap1}, but we have yet to determine the bounds when using corner connection tiles. However, we can say a few things about the lower bound of the number of non-blank tiles. 

\begin{lemma}\label{lemma:minimal-nonblank-tiles}
  The minimum number of non-blank corner connection tiles necessary to create an $n$-mosaic, for $n \geq 3$, that cannot be made to fit on a smaller mosaic via mosaic planar isotopy moves is an increasing function of $n$. This minimum is $2n+2$ for $n=3$ and $n=4$. For $n \geq 5$, the minimum is greater than $2n+2$.
\end{lemma}

\begin{proof}
Suppose we have an $n$-mosaic, with $n \geq 3$, created using corner connection tiles, and assume that every column (or every row) is occupied so that the resulting mosaic cannot fit on a smaller mosaic. To achieve a nontrivial knot, we must have at least three rows occupied because of Lemma \ref{lemma:start-end-tiles}, and one could say that the minimum number of non-blank tiles is $2n+2$, created by using a non-blank tile in every location of the first and third row and only two non-blank tiles in the second row. See Figure \ref{fig:minimal-layouts}. This is fine for a 3-mosaic and 4-mosaic, where 8 and 10 are, respectively, the minimum number of non-blank tiles, but for $n \geq 5$, this layout can be made to fit on an $(n-1)$-mosaic via simple mosaic planar isotopy moves. In fact, we will see below in Theorem \ref{thm:5mosaic-knots} that the minimum number of non-blank tiles for a 5-mosaic (that cannot be made to fit on a smaller mosaic) is 13, not 12. Similarly, one can show that the minimum number of non-blank tiles on a 6-mosaic is at least 16, not 14. However, it is clear that the minimum number of non-blank tiles on an $n$-mosaic is an increasing function of $n$.
\end{proof}

\begin{figure}[ht]
  \centering
  \includegraphics[scale=.8]{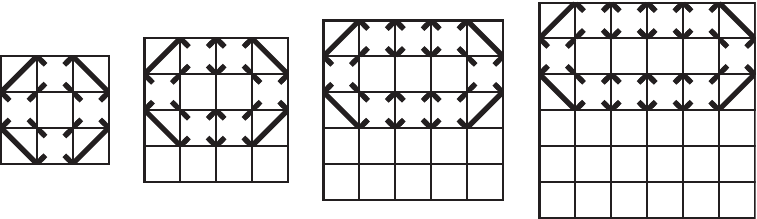}\\
  \caption{Possible minimal layouts for an $n$-mosaic.}
  \label{fig:minimal-layouts}
\end{figure}

\begin{theorem}\label{thm:3mosaic-knots}
  The only knot with corner connection mosaic number 3 is the trefoil knot, $\mcc(3_1)=3$, and the corner connection tile number of the trefoil knot is $\tcc(3_1)=8$.
\end{theorem}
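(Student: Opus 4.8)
The plan is to read both upper bounds off a single explicit diagram and then to establish the matching lower bounds and the uniqueness claim from a count of crossing tiles. The trefoil diagram displayed in Figure~\ref{fig:examples-new} lies on a 3-mosaic with exactly eight non-blank tiles, so immediately $\mcc(3_1)\le 3$ and $\tcc(3_1)\le 8$. The bound $\mcc(3_1)\ge 3$ is then quick: every tile of a 2-mosaic is a corner tile, and as noted above a corner tile cannot be a crossing, so a 2-mosaic carries no crossings and depicts only the unknot. Since the trefoil is nontrivial, it cannot fit on a 2-mosaic, and hence $\mcc(3_1)=3$.

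For the uniqueness of the trefoil I would bound the number of crossing tiles a knot can carry on a 3-mosaic. The number of crossings in any diagram bounds the crossing number from above, and since no knot has crossing number $1$ or $2$, any diagram with at most three crossings represents the unknot or the trefoil. The analysis preceding the theorem confines the crossings of a space-efficient 3-mosaic to the five non-corner positions and shows that a central crossing forces all crossings into one middle row or column, hence at most three. The one placement that escapes these layouts is four crossings in the four edge positions with a blank center, and ruling this out is the crux of the argument: the four corners are then forced to be diagonal segments, and tracing the strands through the four crossings and four corners shows that the diagram closes up into two disjoint components, so it is a two-component link rather than a knot. Consequently every knot on a 3-mosaic uses at most three crossing tiles and is the unknot or the trefoil; as the unknot has corner connection mosaic number $2$, the trefoil is the unique knot with corner connection mosaic number $3$.

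Finally, I would prove $\tcc(3_1)\ge 8$ by cases on the mosaic size. A trefoil diagram needs at least three crossings, so it cannot sit on a 2-mosaic. On a 3-mosaic the layout analysis behind Figure~\ref{fig:3mosaic-layouts} shows that every admissible placement of the crossings uses at least eight non-blank tiles. On any mosaic of side length at least $4$, Theorem~\ref{thm:4mosaic-layouts} forces at least ten non-blank tiles. Minimizing over all mosaic sizes then yields $\tcc(3_1)=8$.

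I expect the main obstacle to be the four-crossing configuration, since it is the only placement that could a priori realize a four-crossing knot such as the figure-eight on a 3-mosaic; excluding it requires the explicit two-component count above rather than a soft crossing-number estimate. A secondary dependence is that the tile-number lower bound invokes the forthcoming 4-mosaic estimate to preclude a larger but sparser mosaic from beating the 3-mosaic.
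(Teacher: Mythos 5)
Your proposal is correct and follows the same skeleton as the paper's argument: confine crossings to the five non-corner positions, use the central-crossing dichotomy from the layout analysis, exclude the four-edge-crossing configuration, and defer the tile-number lower bound for larger mosaics to Theorem~\ref{thm:4mosaic-layouts} (a space-efficient 4-mosaic needs at least 10 non-blank tiles), exactly as the paper does. The one genuine difference is how uniqueness is concluded: the paper populates the two layouts of Figure~\ref{fig:3mosaic-layouts} with crossing tiles, eliminates links and kinks, and identifies by inspection that both surviving nontrivial mosaics depict the trefoil (Figure~\ref{fig:3mosaic-knots}), which is also how it sees that $4_1$ cannot appear; you instead observe that once the four-crossing placement is ruled out, every knot on a 3-mosaic admits a diagram with at most three crossings and hence is the unknot or the trefoil, which avoids identifying the populated layouts one by one. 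Your explicit trace showing that four crossings in the edge positions (with the center then forced to be blank, since all four interior grid points are saturated) close up into two components is precisely the fact the paper asserts without detail (``using four crossing tiles in the first layout will result in a link''), and since the component count is independent of the over/under choices the exclusion is complete. Both routes are sound; the paper's enumeration additionally produces the explicit trefoil mosaics it reuses, while your crossing-number shortcut gets the uniqueness claim with less casework.
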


\begin{figure}[ht]
  \centering
  \includegraphics{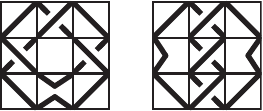}\\
  \caption{Possible nontrivial prime knots for a 3-mosaics.}
  \label{fig:3mosaic-knots}
\end{figure}

\begin{proof}
  We simply populate the layouts of Figure \ref{fig:3mosaic-layouts} with double-arc tiles and at least three crossing tiles. Note that using four crossing tiles in the first layout will result in a link. After eliminating links, kinks, and other inefficiencies, there are only two nontrivial possibilities, shown in Figure \ref{fig:3mosaic-knots}. Both of these depict the trefoil knot. We also see that there is no way to obtain the figure-8 knot $4_1$ (or any other knot with larger crossing number) on a 3-mosaic. We now know the corner connection mosaic number of the trefoil knot, and the corner connection tile number is no more than 8. Because of Theorem \ref{lemma:minimal-nonblank-tiles}, we know that we cannot use fewer non-blank tiles on a larger mosaic, and we are able to conclude that $\tcc(3_1)=8$.
\end{proof}

Now we examine 4-mosaics and determine the possible layouts for a space-efficient 4-mosaic depicting a prime knot, excluding those that fit on a smaller mosaic.

\begin{theorem}\label{thm:4mosaic-layouts} If we have a space-efficient 4-mosaic of a prime knot for which either every column or every row is occupied, then the only possible layouts (up to symmetry) are those given in Figure \ref{fig:4mosaic-layouts}. The number of non-blank tiles in each of these is 10, 11, 12, 12, 13, 13, and 14, respectively.
\begin{figure}[ht]
  \centering
  \includegraphics{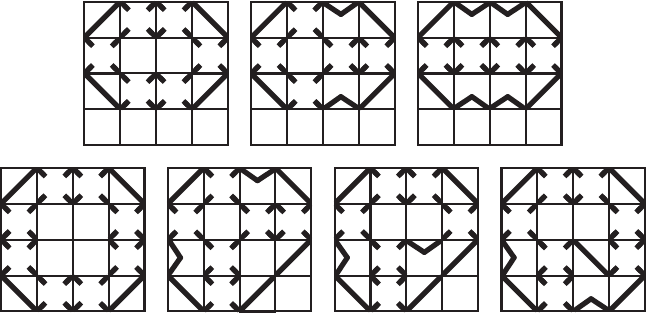}\\
  \caption{Possible space-efficient layouts for a 4-mosaic.}
  \label{fig:4mosaic-layouts}
\end{figure}
\end{theorem}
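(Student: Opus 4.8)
The plan is to carry out a systematic case analysis constrained by the two lemmas already established. First I would fix the setup: since every row or every column is occupied (say every column, by symmetry), and the mosaic is $4 \times 4$, the first Lemma forces the corner tiles in the outermost occupied rows/columns to be diagonal line segments $T_5$ or $T_6$, with at least one tile between the $T_6$ and $T_5$ tiles in any boundary row or column. This immediately pins down the boundary behavior: each of the four corners of the occupied region is a diagonal segment, and the knot's strand must enter and exit through these diagonals. I would then count crossings: because a nontrivial prime knot that does not fit on a $3$-mosaic must have crossing number at least $4$ (the figure-eight $4_1$ is ruled out on a $3$-mosaic by the previous theorem), the $4$-mosaic must contain at least four crossing tiles, subject to Lemma \ref{lemma:2by2} which caps each $2 \times 2$ sub-mosaic at two four-connection-point tiles.

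Next I would enumerate the placements of crossing and double-arc tiles in the interior subject to these constraints. The key structural observation is that a $4 \times 4$ grid decomposes into overlapping $2 \times 2$ sub-mosaics, and Lemma \ref{lemma:2by2} restricts how many four-connection-point tiles (crossings or double-arcs) can cluster together. I would argue that the non-blank tiles must form a connected band threading diagonally from corner to corner, and then classify how many interior tiles carry four connection points versus two. The minimum configuration yielding a valid prime knot gives the $10$-tile layout; adding extra double-arc or diagonal tiles to route the strand without creating kinks, nugatory crossings, or disconnected components produces the $11$, $12$, $13$, and $14$-tile layouts in turn. For each candidate I would verify suitable connectivity at every corner connection point and discard any layout that either fails to close up into a single component, introduces a reducible crossing (violating space-efficiency), or collapses onto a smaller mosaic.

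The main obstacle I expect is the completeness of the enumeration: showing that the listed seven layouts (up to the dihedral symmetry of the square) are the \emph{only} ones, rather than merely valid examples. This requires carefully ruling out configurations that look plausible locally but fail globally, either because the strand cannot close into a knot, because a parity or connectivity obstruction forces an even or odd tile count outside the stated list, or because a seemingly distinct layout is equivalent to a listed one under symmetry or under a mosaic planar isotopy move (Figure \ref{fig:isotopies}). I would handle this by organizing the cases according to the number of four-connection-point tiles and their adjacency pattern, using Lemma \ref{lemma:2by2} to bound each $2 \times 2$ block and the first Lemma to fix the boundary, then checking that every surviving interior filling is accounted for. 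The tile counts $10, 11, 12, 12, 13, 13, 14$ would emerge as the possible totals once the boundary diagonals and the forced interior routing are combined, with the two separate $12$'s and two separate $13$'s distinguished by inequivalent crossing arrangements.
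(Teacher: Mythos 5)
Your overall strategy---an exhaustive case analysis driven by the boundary lemma and Lemma \ref{lemma:2by2}, followed by elimination of links, kinks, and inefficiencies---is the same as the paper's, and your crossing count (at least four crossing tiles, at most two of them in the $2\times 2$ interior) matches the paper's starting point. The difference lies in how the enumeration is seeded and organized. The paper turns the observation that at least two of the four crossings must lie on the boundary into a concrete anchor: by symmetry a crossing may be placed at position $a_{21}$, and the constraints then propagate outward---$a_{12}$ must be a four-connection-point tile or a single-arc $T_4$, which in turn determines the options for $a_{22}$, then $a_{31}$, then the full first two columns, then the first two rows, and finally the lower-right $2\times 2$ corner. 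Each stage produces a short, checkable list of partial mosaics, and that is what makes the completeness claim verifiable. Your proposal instead organizes cases by the number and adjacency pattern of four-connection-point tiles, which could in principle work but leaves the completeness question---which you correctly identify as the main obstacle---without a concrete mechanism for resolving it: you describe how to verify candidate layouts, not how to generate all of them.

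One step in your plan is a genuine gap: the ``key structural observation'' that the non-blank tiles must form a connected band threading diagonally from corner to corner is asserted without justification, is not something the paper claims or uses, and is doubtful for the denser layouts (the last layout has 14 of the 16 tiles non-blank, which is not a diagonal band). If your classification hinges on that claim, the argument fails; if it does not, you still need a deterministic procedure for producing every candidate layout, and the paper's positional build-up from a forced boundary crossing at $a_{21}$ is the ingredient your outline is missing.
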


\begin{proof}
We may assume that the first column and first row of the mosaic contain non-blank tiles, and let us also assume that every column is occupied. We also need at least four crossing tiles. By Lemma \ref{lemma:2by2}, there can be at most two crossing tiles in the $2\times2$ interior.
At least two of the minimum of four crossings must be located in the boundary of the mosaic.

\begin{figure}[ht]
  \centering
  \includegraphics{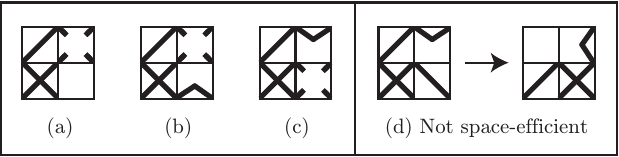}\\
  \caption{Possibilities for the upper-left $2 \times 2$ corner.}
  \label{fig:4mosaic-top-left2by2}
\end{figure}

Let $a_{ij}$ denote the tile position in row $i$, column $j$. Because of symmetry, we may assume that $a_{21}$ is one of the crossings. The diagonally adjacent $a_{12}$ must be a tile with four connection points or a single-arc tile $T_4$, as anything else would cause a link or inefficiency. If $a_{12}$ is a tile with four connection points, then $a_{22}$ must be blank or a single-arc $T_2$ (symmetrically equivalent to using a single-arc $T_3$). If $a_{12}$ is $T_4$, then $a_{22}$ must have four connection points or be a diagonal segment tile $T_5$. In this latter case, the result is not space-efficient, as we can move the crossing into the adjacent $a_{22}$ position and reduce the number of non-blank tiles. So the possibilities for the upper-left $2 \times 2$ corner are those shown in Figure \ref{fig:4mosaic-top-left2by2}, with the first three being the only space-efficient options.

Now examine the $a_{31}$ position. For option (a) in Figure \ref{fig:4mosaic-top-left2by2}, $a_{31}$ can be a diagonal segment tile $T_5$, a single-arc tile $T_1$, or a tile with four connection points. Once that choice is made, the rest of the lower-left $2\times 2$ corner is easy to complete. If we avoid obvious links and kinks, the resulting possibilities for the first two columns in this case are given in the first six partial mosaics in Figure \ref{fig:4mosaic-first2columns}. For option (b) of Figure \ref{fig:4mosaic-top-left2by2}, each resulting possibility is equivalent to one from the previous option. For option (c) of Figure \ref{fig:4mosaic-top-left2by2}, $a_{31}$ can be a diagonal segment tile $T_5$ or a single-arc tile $T_1$. Again, we can easily complete the rest of the lower-left $2\times 2$ corner, and the resulting viable possibilities are given in the remaining four diagrams of Figure \ref{fig:4mosaic-first2columns}.

\begin{figure}[ht]
  \centering
  \includegraphics[width=\linewidth]{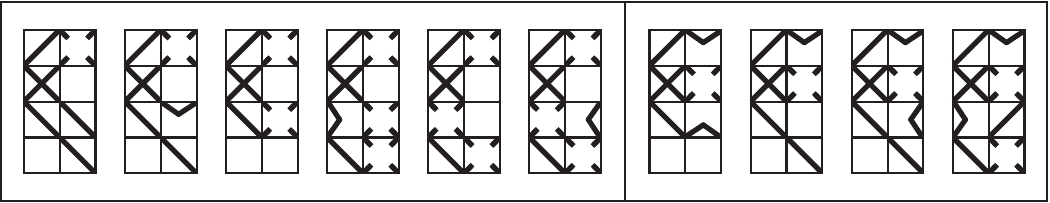}\\
  \caption{Possibilities for the first two columns.}
  \label{fig:4mosaic-first2columns}
\end{figure}

We note that the first option in Figure \ref{fig:4mosaic-first2columns} is not space-efficient, as the crossing can be moved into the $a_{32}$ position and the tile with four connection points can be moved to the $a_{22}$ position, resulting in an empty first column. For the remaining options, we complete the top two rows of the partial mosaics. Notice that in every case, the tile in the $a_{12}$ position has a connection point in the top right corner, which limits the options for the tile in the $a_{13}$ position. This, together with the fact that every column must be occupied by the knot diagram, leads to the possibilities given in Figure \ref{fig:4mosaic-top-right2by2} for the upper-right $2 \times 2$ corner.

\begin{figure}[ht]
  \centering
  \includegraphics{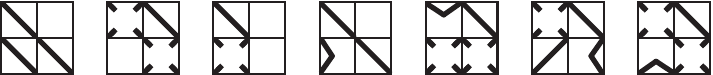}\\
  \caption{Possibilities for the upper-right $2 \times 2$ corner.}
  \label{fig:4mosaic-top-right2by2}
\end{figure}

We now adjoin all of these upper-right $2 \times 2$ corner possibilities to the options given in Figure \ref{fig:4mosaic-first2columns}, and we leave it to the reader to verify the following claims. The second option in Figure \ref{fig:4mosaic-first2columns} leads to only one space-efficient possibility, with the rest clearly being able to simplify to mosaics with fewer non-blank tiles. The third option leads to two partial mosaics, after eliminating space-inefficient options or those that can fit on a smaller mosaic. The fourth option also leads to two partial mosaics, again after eliminating inefficient layouts or those that are equivalent to those found in previous cases. We continue this process for each of the remaining options in Figure \ref{fig:4mosaic-first2columns}, eliminating any inefficiencies or symmetrical redundancy. The final set of possibilities are given in Figure \ref{fig:4mosaic-first2columns-2rows}.

\begin{figure}[ht]
  \centering
  \includegraphics[width=\linewidth]{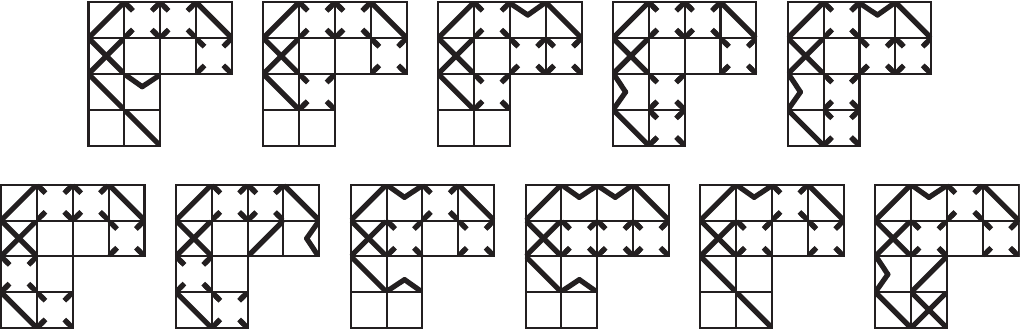}\\
  \caption{Possibilities for the first two columns and two rows of the 4-mosaic.}
  \label{fig:4mosaic-first2columns-2rows}
\end{figure}

Finally, we are able to complete the 4-mosaics. Taking the options given in Figure \ref{fig:4mosaic-first2columns-2rows}, we simply fill out the lower-right $2\times 2$ corner. For many, there is only one way to do this in order to avoid inefficiency in this corner. In some instances, even this leads to an overall inefficient mosaic. Omitting any redundancy, the final results are exactly those given above in Figure \ref{fig:4mosaic-layouts}.
\end{proof}

\begin{theorem}\label{thm:4mosaic-knots}
  The only prime knots with corner connection mosaic number 4 are $4_1$, $5_1$, $5_2$, $6_1$, $7_1$, and $7_2$. The corner connection tile numbers of the first five of these knots are $\tcc(5_1)=10$, $\tcc(4_1)=\tcc(5_2)=11$, $\tcc(7_1)=12$, and $\tcc(6_1)=13$. On a 4-mosaic, $7_2$ requires 14 non-blank tiles, and $\tcc(7_2)\leq 14$.
\end{theorem}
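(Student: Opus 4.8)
The plan is to reduce the theorem to a finite, mechanical enumeration built on top of Theorem \ref{thm:4mosaic-layouts}. That theorem already pins down the seven possible space-efficient 4-mosaic layouts, together with their non-blank tile counts ($10, 11, 12, 12, 13, 13, 14$), for any prime knot occupying every row or every column. First I would argue that we may restrict attention to exactly these layouts: a prime knot on a 4-mosaic not occupying some row or column would fit on a 3-mosaic, and by Theorem \ref{thm:3mosaic-knots} the only such knot is the trefoil, whose corner connection mosaic number is $3$. So every prime knot with $\mcc(K) = 4$ arises by filling one of the seven layouts.

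Next, for each layout I would enumerate all admissible fillings. Each position marked as a four-connection-point tile must be one of $T_7, T_8, T_9, T_{10}$, and each genuine crossing carries an over/under choice ($T_9$ versus $T_{10}$); I would generate every such assignment, discarding those that violate suitable connection, produce a kink or a nugatory crossing, or split into more than one component (yielding a link rather than a knot). Working up to the symmetries of the square and the reversal of all crossings keeps the list short. For each surviving diagram I would extract an ordinary knot projection, read off a Gauss code or Dowker--Thistlethwaite code, and match it against the standard table of prime knots through eight crossings. Fillings that reduce to the unknot or to the trefoil are set aside, since those do not have corner connection mosaic number $4$.

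Carrying out this identification, I expect the only prime knots to appear to be $4_1, 5_1, 5_2, 6_1, 7_1$, and $7_2$, which settles the mosaic-number claim in both directions: each of these is realized on some 4-mosaic layout, none is the trefoil so none fits on a 3-mosaic by Theorem \ref{thm:3mosaic-knots}, and no prime knot outside this list (in particular no $6_2, 6_3, 7_3, \dots$ and no eight-crossing knot) occurs. For the tile-number assertions I would record, for each knot, the smallest layout on which it appears; this gives the upper bounds $\tcc(5_1) \le 10$, $\tcc(4_1), \tcc(5_2) \le 11$, $\tcc(7_1) \le 12$, $\tcc(6_1) \le 13$, and $\tcc(7_2) \le 14$, and simultaneously shows that on a 4-mosaic each knot \emph{requires} that many non-blank tiles. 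The matching lower bounds for the first five would then follow by ruling out any depiction with fewer non-blank tiles on a larger mosaic.

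The main obstacle is precisely this last step: converting the 4-mosaic minima into genuine corner connection tile numbers. Establishing $\tcc(K) \ge N$ requires a bound valid over \emph{all} mosaic sizes, not just size $4$, since a larger grid can sometimes reduce the tile count; this is the phenomenon flagged by Theorem \ref{thm:tile-number-not-minimal}. For $5_1, 4_1, 5_2, 7_1$, and $6_1$ I would argue that a counting bound relating non-blank tiles to the crossing number (each crossing forces a crossing tile, and suitable connection of those tiles forces enough additional arc and diagonal tiles) meets the 4-mosaic value, forcing equality. For $7_2$ this bound falls short of $14$, so I can only conclude $\tcc(7_2) \le 14$ while knowing that $7_2$ needs $14$ tiles on a 4-mosaic; closing that gap would require either a sharper lower bound or an explicit smaller realization on a larger mosaic, and this is left open. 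A secondary but real difficulty is ensuring the enumeration in the middle step is exhaustive and free of duplicate or missed cases, since the knot identifications, and not the layout count, carry the entire weight of the classification.
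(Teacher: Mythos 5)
Your reduction to the seven layouts of Theorem \ref{thm:4mosaic-layouts} and the subsequent case-by-case population of those layouts with crossing and double-arc tiles is exactly the paper's strategy for the classification part, and the realizations in the paper's Figure \ref{fig:4mosaic-knots} supply the explicit diagrams you anticipate. The genuine gap is in how you propose to convert the 4-mosaic minima into actual values of $\tcc$. You suggest a counting bound ``relating non-blank tiles to the crossing number,'' in which each crossing forces a crossing tile and suitable connection forces some number of additional arc tiles. No such bound can do the job: it would have to certify that $6_1$ (six crossings) needs at least $13$ non-blank tiles while being consistent with $\tcc(7_1)=12$ for a knot with \emph{seven} crossings, so the number of forced ``extra'' tiles would have to be $7$ for $6_1$ but only $5$ for $7_1$. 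The required quantity is not a function of the crossing number, and a local counting argument of the type you describe cannot distinguish these cases.

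The paper's lower-bound argument is of a different nature: it is a statement about grid sizes, not about the knots. Namely, every space-efficient $5$-mosaic has at least $13$ non-blank tiles and every space-efficient $6$-mosaic has at least $15$ (facts established by the $5$-mosaic layout analysis referenced after Theorem \ref{thm:tile-number-not-minimal}). Combined with Theorem \ref{thm:3mosaic-knots} (only the trefoil fits on a $3$-mosaic), this shows that no knot whatsoever can be depicted with fewer than $13$ non-blank tiles on any mosaic of size $5$ or larger, so the $4$-mosaic realizations with $10$, $11$, $12$, and $13$ tiles already achieve the global minimum for $5_1$, $4_1$, $5_2$, $7_1$, and $6_1$. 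Only $7_2$, at $14 > 13$, escapes this argument, which is why the theorem asserts merely $\tcc(7_2)\leq 14$ and the improvement to $13$ is deferred to Theorem \ref{thm:tile-number-not-minimal}. You correctly identified that a bound valid over all mosaic sizes is needed, and your treatment of $7_2$ matches the paper's, but the mechanism you propose for the other five knots would fail; you should replace it with the minimum-tile-count-per-mosaic-size argument. One smaller point: your reduction to the seven layouts should be phrased as ``if some row \emph{and} some column are both unoccupied, the knot fits on a $3$-mosaic,'' since Theorem \ref{thm:4mosaic-layouts} already covers the case where only one of the two conditions (every row occupied, every column occupied) holds.
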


\begin{figure}[ht]
  \centering
  \includegraphics[width=\linewidth]{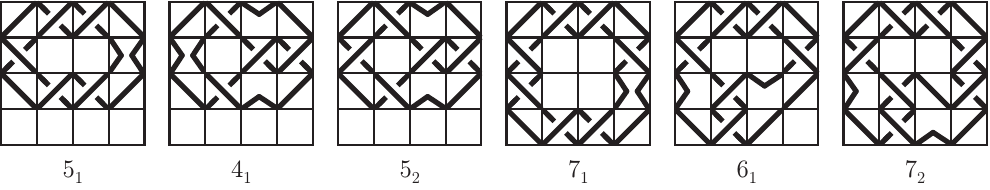}\\
  \caption{Prime knots with corner connection mosaic number 4.}
  \label{fig:4mosaic-knots}
\end{figure}

\begin{proof}
The corner connection mosaic number is easy to determine, as we have provided 4-mosaic diagrams in Figure \ref{fig:4mosaic-knots} for each of the knots. To obtain these and verify that they are the only resulting knots, we need to populate the layouts in Figure \ref{fig:4mosaic-layouts} with double-arc tiles and at least four crossing tiles. Note that if we place four or six crossing tiles on the first layout, the result is always a link and can be disregarded for our purposes. Placing five crossing tiles within the first layout in any alternating configuration results in the $5_1$ knot, while any non-alternating configuration reduces to the trefoil knot or the unknot. For the second layout, there are five possible locations to place the crossing tiles. When placing four alternating crossing tiles, two of them must be in the last two positions of the second row, otherwise the result is a link or kink. Regardless of where the remaining two crossings are placed, the result is the figure-8 knot $4_1$. If we place five alternating crossings in this layout, the result is the $5_2$ knot. Notice that if we place four crossing tiles on the third layout, the result is necessarily a link.

As we move to the next layout, we may now assume there are at least six crossing tiles. Similar to the first layout, an even number of crossing tiles on the fourth layout will always result in a link, so we only need to consider seven crossings. Any configuration of alternating crossings results in the knot $7_1$. Placing six crossings in the fifth layout results in a link, and placing six alternating crossings in the sixth layout produces the knot $6_1$. In the final layout, to avoid a link, both of the two locations in the last two entries of the second column must be a crossing. Populating the remaining five locations with any configuration of four alternating crossings produces the knot $6_1$ again, and populating them with five alternating crossings produces the knot $7_2$.

Similar to what we noted above for 3-mosaics, the knot mosaics in Figure \ref{fig:4mosaic-knots} technically only provide an upper bound for each corner connection tile number. We note that the smallest possible number of non-blank tiles on a space-efficient 5-mosaic is 13, as we will see below. We may conclude that $\tcc(5_1)=10$, $\tcc(4_1)=\tcc(5_2)=11$, $\tcc(7_1)=12$, and $\tcc(6_1)=13$.
\end{proof}

We now arrive at our first bit of intrigue. The knot $7_2$ needs 14 non-blank tiles to fit on a 4-mosaic, but it can be displayed on a 5-mosaic with fewer non-blank tiles. See Figure \ref{fig:5mosaic-7_2}. This mirrors what we see when using traditional tiles. However, using traditional tiles, the first occurrence of this phenomenon is the knot $9_{10}$ \cite{Heap1}.

\begin{figure}[ht]
  \centering
  \includegraphics{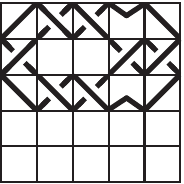}\\
  \caption{The knot $7_2$ on a 5-mosaic with only 13 non-blank tiles.}
  \label{fig:5mosaic-7_2}
\end{figure}

\begin{theorem}\label{thm:tile-number-not-minimal}
The corner connection tile number of $7_2$ is $\tcc(7_2)=13$.
\end{theorem}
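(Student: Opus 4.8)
The statement is a two-sided estimate, and the upper bound $\tcc(7_2)\le 13$ is already in hand: the explicit $5$-mosaic of Figure \ref{fig:5mosaic-7_2} realizes $7_2$ with exactly $13$ non-blank tiles. The entire content of the theorem is therefore the lower bound $\tcc(7_2)\ge 13$, i.e. the claim that $7_2$ admits no corner-connection realization using $12$ or fewer non-blank tiles on a mosaic of \emph{any} size. The plan is to organize this as a finite case analysis on the minimal bounding rectangle $a\times b$ (with $a\le b$) of the diagram inside the mosaic, using that the non-blank tile count is unchanged by shifting the diagram so this rectangle sits in the upper-left corner. Since $\mcc(7_2)=4$, the long side satisfies $b\ge 4$, so the bounding box is not a $3$-mosaic or smaller, where by Theorem \ref{thm:3mosaic-knots} only the trefoil lives.

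I would then split into regimes. For $b=4$ with $a=4$, every row and column is occupied, so Theorem \ref{thm:4mosaic-layouts} pins down the admissible layouts and Theorem \ref{thm:4mosaic-knots} forces $14$ non-blank tiles, comfortably exceeding $12$. For the thin boxes with short side at most $3$, note that $7_2$ is a twist knot, so I cannot simply declare it unrealizable in a narrow strip; instead the tool is Lemma \ref{lemma:2by2}, which caps each $2\times 2$ block at two four-connection-point tiles and so bounds the crossing density, forcing any width-$\le 3$ strip that carries the required crossings to be long. Combined with the boundary lemma (diagonal segments $T_5,T_6$ at the extreme tiles of each occupied row and column), the conclusion to establish in every thin case is the dichotomy: either $7_2$ is not suitably connectable in that box as a prime component, or its length already forces at least $13$ non-blank tiles.

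The remaining and decisive regime is $b\ge 5$, handled by a direct tile count. Any realization uses at least seven crossing tiles, since the crossing number of $7_2$ is $7$ and extra crossings only strengthen the lower bound; suppose toward a contradiction the total were at most $12$, leaving at most five non-crossing tiles. The boundary lemma forces diagonal segments at the extreme corners of the bounding box, consuming several of those five slots, while Lemma \ref{lemma:2by2} prevents the seven crossings from clustering, so the curve must cross interior rows and columns that demand additional double-arc or single-arc "glue" tiles to close up into one component. I would show this residual budget is too small to suitably connect seven spread-out crossings into a single prime diagram, so $12$ tiles is impossible. For the genuine $5\times 5$ sub-case this also agrees with the minimum-tile count for space-efficient $5$-mosaics noted above, and boxes with $b\ge 6$ only enlarge the forced boundary and traversal contributions.

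The main obstacle I anticipate is exactly this last count: certifying that seven (or more) crossing tiles together with at most five connecting tiles can never be assembled into a suitably connected, prime diagram of $7_2$. Unlike the clean $4$-mosaic enumeration, the bounding box is no longer fixed here, so one cannot merely list layouts; the argument must extract a genuine inequality relating the number of crossing tiles, the diagonal and arc "glue" forced by the boundary lemma, and the span of the occupied region, sharp enough to separate $12$ from $13$ while excluding non-alternating, kinked, or link configurations that superficially look cheap. Once that inequality is in place, combining it with the $3$- and $4$-mosaic results, and the $5$-mosaic witness of Figure \ref{fig:5mosaic-7_2}, yields $\tcc(7_2)=13$.
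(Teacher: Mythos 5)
Your overall skeleton is the right one and matches the paper's: the 5-mosaic of Figure \ref{fig:5mosaic-7_2} gives $\tcc(7_2)\le 13$, the 4-mosaic case is closed by Theorem \ref{thm:4mosaic-knots} (14 non-blank tiles required there), and what remains is to show that no mosaic of size 5 or larger depicts $7_2$ with 12 or fewer non-blank tiles. But that remaining step is exactly where your argument stops. You describe a hoped-for inequality --- seven crossing tiles plus at most five ``glue'' tiles cannot be suitably connected into a prime diagram of $7_2$, given the forced diagonal segments of the boundary lemma and the anti-clustering constraint of Lemma \ref{lemma:2by2} --- and then explicitly flag it as ``the main obstacle I anticipate'' without establishing it. Since the entire content of the theorem is this lower bound, that is a genuine gap rather than a detail: as written, the budget count does not obviously close, because the forced diagonal tiles at the extremes of the occupied rows can coincide with those of the occupied columns, so only four of them need be distinct, and $7+4+1=12$ survives as an arithmetic possibility that your density argument must, but does not yet, exclude.

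The paper closes this step by a different and more concrete mechanism: it appeals to its enumeration of space-efficient layouts, asserting that the smallest possible number of non-blank tiles on a space-efficient 5-mosaic is 13 (cf.\ Figure \ref{fig:5mosaic-layouts-14orless} and the surrounding discussion) and on a space-efficient 6-mosaic is 15, so that passing to a larger mosaic can never beat the 13 realized in Figure \ref{fig:5mosaic-7_2}. In other words, the lower bound comes not from a crossing-density inequality but from a classification of admissible layouts at each size, the same machinery used in Theorems \ref{thm:4mosaic-layouts} and \ref{thm:4mosaic-knots} one size down. To complete your version you would need either to actually prove the quantitative inequality you sketch, or to fall back on the layout enumeration as the paper does.
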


\begin{proof}
To see that $\tcc(7_2)=13$, we again note that the smallest possible number of non-blank tiles on a space-efficient 5-mosaic is 13, and in Figure \ref{fig:5mosaic-7_2} we provide a 5-mosaic of $7_2$ with this minimum realized. We also note that the smallest possible number of non-blank tiles on a space-efficient 6-mosaic is 15.
\end{proof}

\begin{figure}[ht]
  \centering
  \includegraphics[width=\linewidth]{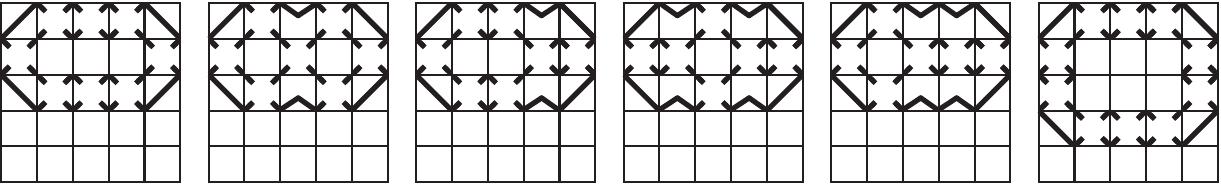}\\
  \caption{Layouts for a 5-mosaic with 14 or less non-blank tiles and at least 6 crossings locations.}
  \label{fig:5mosaic-layouts-14orless}
\end{figure}

The number of possible space-efficient layouts for a 5-mosaic is large, so we do not include them here. However, the simplest layouts with 14 or less non-blank tiles and with at least 6 crossings are those given in Figure \ref{fig:5mosaic-layouts-14orless}. The first of these layouts (with 12 non-blank tiles) is not actually space-efficient, as it can be made to fit on a 4-mosaic, equivalent to the fourth layout of Figure \ref{fig:4mosaic-layouts}.

%

While we do not determine every prime knot with corner connection mosaic number 5, we verify that every prime knot with crossing number 8 or less not listed previously in Theorems \ref{thm:3mosaic-knots} and \ref{thm:4mosaic-knots} has corner connection mosaic number 5. To see this, we provide a 5-mosaic for each of these in Figure \ref{fig:5mosaic-knots}. We have not verified that the minimum number of non-blank tiles is used in the provided mosaics for all of these, so we cannot definitively conclude the corner connection tile number for most of them. Perhaps there is a more space-efficient layout for some of these, possibly obtained by adding extra crossings. However, for every knot with crossing number 8 or less, the corner connection tile number is less than or equal to 20.

\begin{figure}[ht]
  \centering
  \includegraphics[width=.85\linewidth]{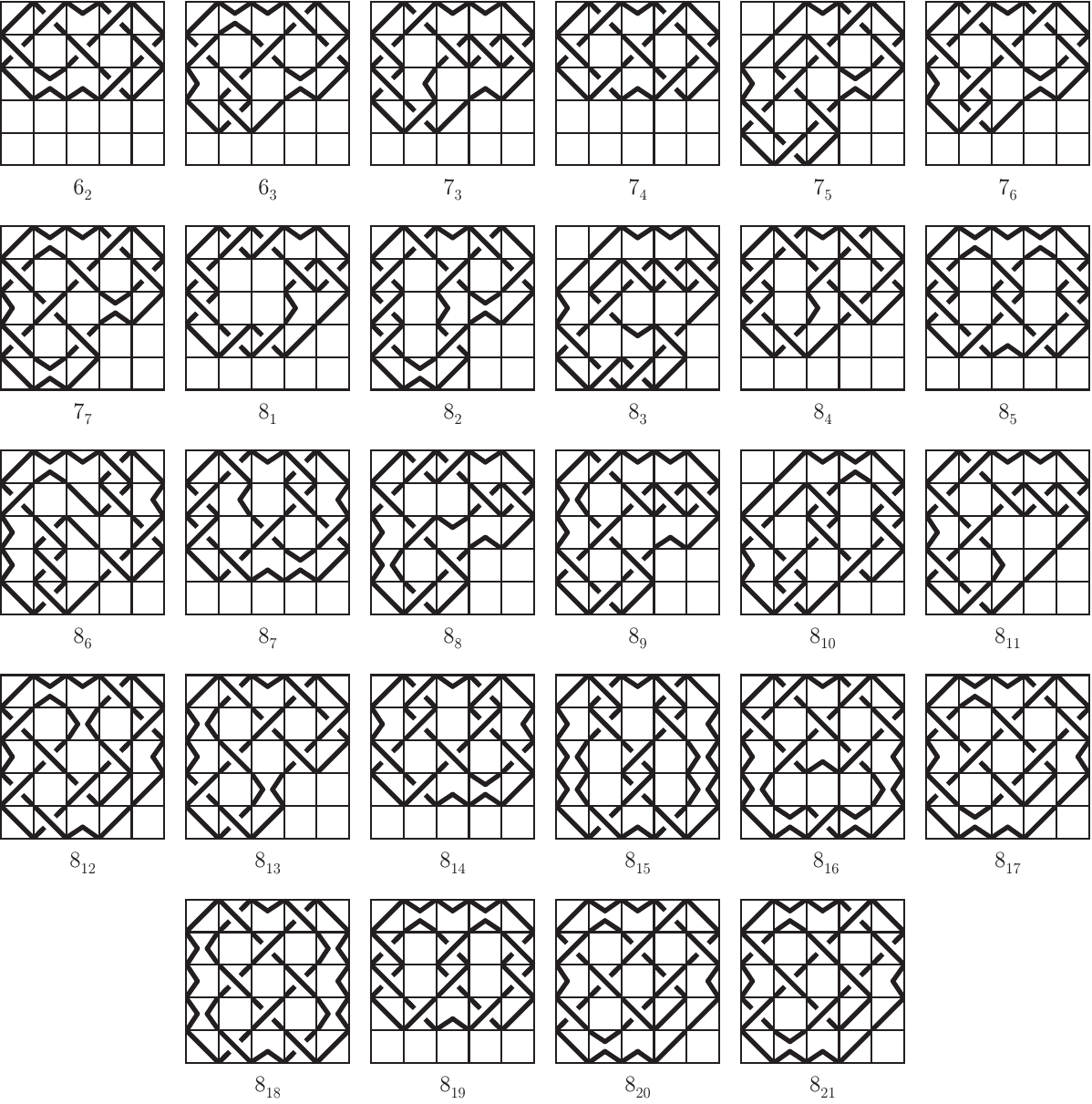}\\
  \caption{The remaining prime knots with crossing number 8 or less have corner connection mosaic number 5.}
  \label{fig:5mosaic-knots}
\end{figure}

\begin{theorem}\label{thm:5mosaic-knots}
  The knots $6_2$, $6_3$, $7_3$, $7_4$, $7_5$, $7_6$, $7_7$, and every prime knot with crossing number 8 have corner connection mosaic number 5. Of these, the known corner connection tile numbers are $\tcc(6_2)=\tcc(7_4)=13$ and $\tcc(6_3)=\tcc(7_6)=\tcc(8_1)=\tcc(8_4)=15$. For the remaining knots, $K$, in this set,  $15 \leq \tcc(K) \leq 20$.
\end{theorem}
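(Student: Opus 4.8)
The plan is to separate the corner connection mosaic-number claim from the tile-number claims, and to lean heavily on the complete classifications already obtained for smaller mosaics. For the mosaic number, the bound $\mcc(K) \le 5$ for each listed knot is immediate from the explicit diagrams exhibited in Figure~\ref{fig:5mosaic-knots}. For the matching lower bound I would invoke Theorems~\ref{thm:3mosaic-knots} and~\ref{thm:4mosaic-knots}, which list \emph{every} prime knot of corner connection mosaic number $3$ or $4$, namely $3_1$ and $\{4_1,5_1,5_2,6_1,7_1,7_2\}$. Since none of $6_2,6_3,7_3,7_4,7_5,7_6,7_7$ nor any prime $8$-crossing knot appears on those lists, each such $K$ has $\mcc(K) \ge 5$, hence $\mcc(K)=5$.

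I would then read the tile-number upper bounds off Figure~\ref{fig:5mosaic-knots} by counting non-blank tiles: this gives $13$-tile diagrams for $6_2$ and $7_4$, $15$-tile diagrams for $6_3,7_6,8_1,8_4$, and diagrams with at most $20$ non-blank tiles for every knot in the set, so $\tcc(K)\le 20$ throughout. For a universal lower bound, every $K$ in the set has $\mcc(K)=5$, so it fits on no mosaic smaller than $5\times 5$; since a space-efficient $5$-mosaic uses at least $13$ non-blank tiles and any space-efficient mosaic of size at least $6$ uses at least $15$ (the facts underlying Theorem~\ref{thm:tile-number-not-minimal}), every diagram of $K$ uses at least $13$ non-blank tiles. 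This already yields $\tcc(6_2)=\tcc(7_4)=13$.

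To push the lower bound up to $15$ for the remaining knots, I would rule out any realization with $13$ or $14$ non-blank tiles. Each knot still in play has crossing number at least $6$, so every one of its diagrams contains at least six crossing tiles; consequently the only relevant small mosaics are the space-efficient $5$-mosaic layouts having at most $14$ non-blank tiles and at least six crossing locations, which are exactly the layouts of Figure~\ref{fig:5mosaic-layouts-14orless}. Following the method of the proofs of Theorems~\ref{thm:3mosaic-knots} and~\ref{thm:4mosaic-knots}, I would fill each such layout with double-arc tiles and with every admissible assignment of crossings (alternating and non-alternating), discard the configurations that close up to links or collapse onto a smaller mosaic, and identify the resulting knot types. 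The content to verify is that the only prime knots produced with at most $14$ tiles are $6_2$ and $7_4$ (besides knots already handled on smaller mosaics), and in particular that no $8$-crossing knot arises. Granting this, each of $6_3,7_3,7_5,7_6,7_7,8_1,8_4$ and every other prime $8$-crossing knot needs at least $15$ non-blank tiles; combined with the upper bounds, this gives $\tcc(6_3)=\tcc(7_6)=\tcc(8_1)=\tcc(8_4)=15$ and $15\le \tcc(K)\le 20$ for the rest.

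The main obstacle is the finite but substantial case analysis inside this last step. One must confirm that the list of $5$-mosaic layouts with at most $14$ non-blank tiles is complete up to symmetry rather than only the ``simplest'' representatives, and then correctly determine, via Gauss codes or standard invariants, the knot type of each admissible crossing assignment, making sure that no target knot slips in through a non-alternating configuration or an unexpectedly economical layout. Lemma~\ref{lemma:2by2} is what keeps this enumeration tractable, since by capping the number of crossing tiles in each $2\times 2$ block it bounds how many crossings a low-tile layout can carry.
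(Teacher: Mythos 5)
Your proposal is correct and follows essentially the same route as the paper: the mosaic-number claim comes from the explicit $5$-mosaics in Figure~\ref{fig:5mosaic-knots} together with the exhaustive lists in Theorems~\ref{thm:3mosaic-knots} and~\ref{thm:4mosaic-knots}, and the tile numbers come from populating the $\le 14$-tile, $\ge 6$-crossing layouts of Figure~\ref{fig:5mosaic-layouts-14orless} to show only $6_2$ and $7_4$ (besides previously handled knots) admit fewer than $15$ non-blank tiles. Your explicit flagging of the completeness of that layout list as the main unverified step is a fair reading of the same gap the paper leaves to the reader.
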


\begin{proof}
  The mosaics in Figure \ref{fig:5mosaic-knots} verify the corner connection mosaic number is 5 for each of these knots. To get the corner connection tile numbers, we populate the layouts of Figure \ref{fig:5mosaic-layouts-14orless} with at least 6 crossing tiles. The only knots that come from the second layout are $6_2$ and $7_4$, and the only knots that come from the third layout are $6_1$ and  $7_2$, which were found previously. The next two layouts, with 14 non-blank tiles, are links when populated with 6 crossings. The final layout results in a link when an even number of crossing tiles are used, but with 7 or 9 alternating crossing tiles, the result is $7_1$ or $9_1$. All remaining 5-mosaic layouts have 15 or more non-blank tiles, and the mosaics in Figure \ref{fig:5mosaic-knots} have at most 20 non-blank tiles. By Theorem \ref{lemma:minimal-nonblank-tiles} we are able to make the given conclusions about the corner connection tile numbers.
\end{proof}

\begin{theorem}
  For all prime knots $K$ with crossing number 8 or less, $\mcc(K)\leq m(K)$ and $\tcc(K)\leq t(K)$.
\end{theorem}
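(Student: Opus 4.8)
The plan is to treat this as a finite verification rather than a new construction. For every prime knot $K$ with crossing number at most $8$ we have already pinned down the corner connection invariants, or at least bounded them, in Theorems \ref{thm:3mosaic-knots}, \ref{thm:4mosaic-knots}, \ref{thm:tile-number-not-minimal}, and \ref{thm:5mosaic-knots}. The traditional invariants $m(K)$ and $t(K)$ for all such knots are already recorded in the literature \cite{Heap4,Heap2,Heap3,Kuriya,Lee2}. So I would organize the proof around a single comparison table listing, for each knot, the four quantities $\mcc(K)$, $m(K)$, $\tcc(K)$, $t(K)$, and then simply read off the two desired inequalities.

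For the mosaic-number inequality $\mcc(K)\le m(K)$, I would split by crossing number. The trefoil satisfies $\mcc(3_1)=3<4=m(3_1)$; the knots $4_1,5_1,5_2$ satisfy $\mcc=4<5=m$; and every prime knot with crossing number $6$, $7$, or $8$ has $\mcc(K)\le 5$ by Theorems \ref{thm:4mosaic-knots} and \ref{thm:5mosaic-knots}, while its traditional mosaic number is at least $6$ (none of these knots fits on a traditional $5$-mosaic, as is known from the classification of knots with mosaic number $6$ or less). Hence the inequality holds, with strict inequality in every case.

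For the tile-number inequality $\tcc(K)\le t(K)$, the key observation is that whenever only an upper bound $\tcc(K)\le U$ has been established, it suffices to verify $U\le t(K)$, since then $\tcc(K)\le U\le t(K)$. I would then compare knot by knot: the exactly determined values $\tcc(3_1)=8$, $\tcc(5_1)=10$, $\tcc(4_1)=\tcc(5_2)=11$, $\tcc(7_1)=12$, $\tcc(6_1)=\tcc(6_2)=\tcc(7_2)=\tcc(7_4)=13$, and $\tcc(6_3)=\tcc(7_6)=\tcc(8_1)=\tcc(8_4)=15$ against their traditional tile numbers, and for the remaining $7$- and $8$-crossing knots use the universal bound $\tcc(K)\le 20$ from Theorem \ref{thm:5mosaic-knots} against the larger traditional tile numbers of those knots.

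The main obstacle, such as it is, is bookkeeping rather than mathematics: one must assemble the correct published values of $m(K)$ and $t(K)$ for all prime knots with crossing number at most $8$ and confirm that no traditional invariant ever drops to or below the corresponding corner connection value. The genuinely delicate entries are those for which only the bound $\tcc(K)\le 20$ is available, since here the comparison is tightest; for these one must check that every remaining $7$- and $8$-crossing prime knot has traditional tile number strictly exceeding $20$. This follows from the growth of the traditional tile number with crossing number that is already evident in the smaller cases (for instance $\tcc(3_1)=8$ versus $t(3_1)=12$), and it is confirmed directly by the tabulated values in \cite{Heap4,Heap2,Heap3}. Once this table is laid out, both inequalities hold for every knot, completing the proof.
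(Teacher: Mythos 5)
Your approach is the same as the paper's: both proofs are finite, knot-by-knot comparisons of the corner connection invariants established in Theorems \ref{thm:3mosaic-knots}, \ref{thm:4mosaic-knots}, \ref{thm:tile-number-not-minimal}, and \ref{thm:5mosaic-knots} against the published values of $m(K)$ and $t(K)$, and your tile-number argument (exact values of $\tcc$ where known, and the bound $\tcc(K)\leq 20$ against $t(K)\geq 22$ for the remaining $7$- and $8$-crossing knots) is precisely what the paper does. However, one of your factual inputs for the mosaic-number half is wrong: it is not true that no prime knot with crossing number $6$, $7$, or $8$ fits on a traditional $5$-mosaic. The knots $6_1$, $6_2$, and $7_4$ all have traditional mosaic number $5$ (see \cite{Lee2}; indeed $6_1$ appears on a space-efficient $5$-mosaic in Figure \ref{fig:examples-traditional}), so your parenthetical appeal to the classification of knots with mosaic number $6$ or less is misapplied here. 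The desired inequality survives this slip, since $\mcc(6_1)=4\leq 5$ and $\mcc(6_2)=\mcc(7_4)=5\leq 5$, but for $6_2$ and $7_4$ the comparison is an equality, so your further claim that $\mcc(K)<m(K)$ holds strictly in every case is also false. Once those three entries of your table are corrected, the bookkeeping closes and the argument coincides with the paper's proof.
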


\begin{proof}
  We noted in Section \ref{section:introduction} that using the corner connection tiles can produce more space-efficient mosaics for the unknot and the prime knots $3_1$, $4_1$, $5_1$, $6_1$, and $6_3$. Theorems \ref{thm:4mosaic-knots}, \ref{thm:tile-number-not-minimal}, and \ref{thm:5mosaic-knots} allow us to reach the same conclusion for every prime knot with crossing number 8 or less. The mosaic number of $5_2$, $6_2$, and $7_4$ is 5, and the remaining knots with crossing number 8 or less have mosaic number 6 \cite{Lee2}. In all cases, the corner connection mosaic number is less than or equal to the mosaic number. The tile number of $5_2$, $6_2$, and $7_4$ is 17, the remaining knots with crossing number 6 and 7 have tile number 22, as do some with crossing number 8 \cite{Heap1}. The remaining knots with crossing number 8 have tile number 24 or 27 \cite{Heap2}. Using corner connection tiles improves on all of these.
\end{proof}

\section{Maximum Number of Crossings}\label{section:max-crossing}

While using corner connection tiles allows us to create knot mosaics more efficiently for knots with small crossing number, this is not true for all knots. To show this, we first determine the maximum number of crossing tiles that can fit on an $n$-mosaic using the corner connection tiles.

\begin{theorem}\label{thm:max-cross}
  For any $n\geq3$, the upper bound for the number of crossing tiles used in an $n$-mosaic created from corner connection tiles is $n^2/2$ if $n$ is even and $(n^2+n-4)/2$ if $n$ is odd.
\end{theorem}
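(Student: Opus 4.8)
The plan is to recast the statement as a combinatorial packing problem and bound that. Record each crossing tile by marking its cell in the $n\times n$ grid, so a configuration of crossings becomes a $0/1$ pattern. Two necessary conditions constrain the pattern. First, Lemma \ref{lemma:2by2} guarantees that every contiguous $2\times2$ sub-mosaic contains at most two crossings, so every $2\times2$ block of the pattern carries at most two marks. Second, as observed before the lemmas, a corner tile cannot be a four-connection tile, so the four corner cells are never marked. Because these conditions are necessary, the maximum number of marks over all patterns satisfying them is an upper bound for the number of crossing tiles, and I would separately exhibit explicit mosaics realizing the counts to confirm the bound is attained.

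For even $n$ the upper bound is immediate. Partition the grid into $(n/2)^2$ pairwise disjoint $2\times2$ blocks; each carries at most two marks by Lemma \ref{lemma:2by2}, so the total is at most $2\,(n/2)^2 = n^2/2$. (The corner condition is not even needed here.) The odd case is the crux, precisely because no such partition exists. Writing $n=2k+1$, I would peel the outer frame, separating the $4n-8$ non-corner boundary cells from the inner $(n-2)\times(n-2)$ square and inducting on $k$. The naive estimate, bounding the frame by $4n-8$ and the interior independently, is far too weak: it ignores the interaction at the shared corner connection points. The essential phenomenon, which also explains why the odd value $(n^2+n-4)/2$ exceeds $n^2/2$, is that boundary connection points are shared by only two tiles, so the frame can be packed more densely than the interior; but a densely packed frame then forces the adjacent interior ring to be sparse, since if two boundary cells of a $2\times2$ block are crossings then the other two cells of that block cannot be.

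Thus the main obstacle is quantifying this coupling between a saturated boundary and the interior it suppresses; every purely aggregate count (summing the bound over all overlapping $2\times2$ blocks, or charging crossings to lattice points) overshoots by exactly the gap between $(n^2+n-4)/2$ and the unconstrained maximum $(n^2+n)/2$. I would handle the coupling either by carrying an inductive hypothesis that tracks how many connection points along the inner boundary have already been consumed by the frame, or, in the concrete style of Theorem \ref{thm:4mosaic-layouts}, by a direct case analysis of the $2\times2$ blocks lying along the last two rows and last two columns, using the corner-exclusion to account for the two crossings that separate the constrained bound from the unconstrained one. Finally, frame-heavy, interior-sparse constructions (of which the $5$-mosaic is the first instance) provide matching configurations, showing that $(n^2+n-4)/2$ is attained for odd $n$ and $n^2/2$ for even $n$.
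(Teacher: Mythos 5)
Your even case is exactly the paper's argument (partition into $(n/2)^2$ disjoint $2\times 2$ blocks and apply Lemma \ref{lemma:2by2}) and is complete. The odd case, however, is where the entire content of the theorem lies, and your proposal does not prove it: you correctly diagnose the difficulty --- a densely packed boundary strip suppresses crossings in the adjacent interior cells, and no purely additive count over disjoint pieces reaches $(n^2+n-4)/2$ --- but you then only list two strategies you ``would'' pursue (an induction tracking consumed connection points on the inner boundary, or a case analysis of the blocks along the last two rows and columns) without executing either. Identifying the crux is not the same as resolving it; as written, the odd bound is asserted, not derived. There is also a small accounting slip: relative to the naive disjoint decomposition, the corner-exclusion saves $2$ crossings but the boundary--interior coupling must save an additional $(n-3)/2$, so attributing the whole gap to the corners does not balance.

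For comparison, the paper avoids your induction entirely by choosing a different decomposition: it peels off only the last row and last column (an L-shape), so the remaining $(n-1)\times(n-1)$ square is \emph{even} and is bounded by $(n-1)^2/2$ via the already-settled even case. The L-shape is then cut into $(n-1)/2$ vertical and $(n-1)/2$ horizontal dominoes; the two dominoes containing corners hold at most one crossing each, and the key quantitative claim is that of the remaining $n-3$ dominoes at most half can hold two crossings, the rest at most one, yielding $\frac{(n-1)^2}{2}+2+2\bigl(\frac{n-3}{2}\bigr)+\frac{n-3}{2}=\frac{n^2+n-4}{2}$. If you want to complete your version, that last claim (or its analogue for your frame) is precisely the lemma you must prove; your frame-plus-induction route reduces odd $n$ to odd $n-2$ and so re-encounters the same coupling at every step, whereas the paper's L-shape reduction confronts it only once.
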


\begin{proof}
For a 3-mosaic, there are five non-corner tile locations that could contain a crossing. If the center tile position does not have a crossing, there are at most four crossing tiles in the mosaic. If the center tile position is a crossing, then by Lemma \ref{lemma:2by2}, there are at most three crossing tiles in the mosaic, with all of them being in the center row or all of them in the center column. 


Now consider an arbitrary $n$-mosaic, with $n \geq 4$. In the case where $n$ is even, the mosaic has $n^2/4$ non-overlapping $2 \times 2$ sub-mosaics, and by Lemma \ref{lemma:2by2}, each has at most two crossing tiles. Therefore, there are at most $n^2/2$ crossings in the entire mosaic. See the first mosaic in Figure \ref{fig:maximal-crossings} for an example.

\begin{figure}[ht]
  \centering
  \includegraphics[scale=.7]{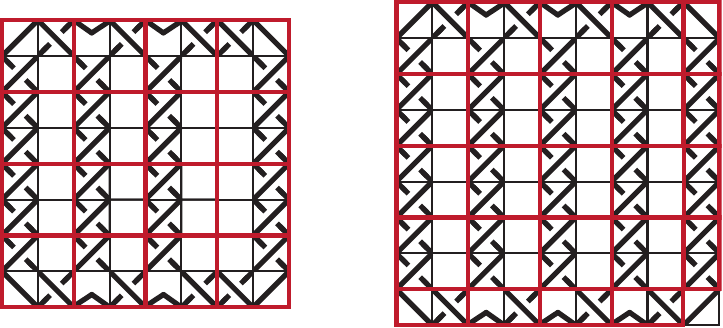}\\
  \caption{Even and odd examples of possible maximal crossing placements.}
  \label{fig:maximal-crossings}
\end{figure}

In the case where $n$ is odd, we note that $n-1$ is even, and looking at the original $n$-mosaic but ignoring the last row and column, the remaining $(n-1)\times (n-1)$ region has $(n-1)^2/4$ non-overlapping $2\times 2$ sub-mosaics, each with at most two crossings, for a maximum of $(n-1)^2/2$ crossings. Looking at the last column, we know the first and last tile cannot be a crossing, and there are $(n-1)/2$ non-overlapping $2 \times 1$ blocks. There is at most one crossing in the first $2 \times 1$ block. For the $(n-3)/2$ remaining $2 \times 1$ blocks, the number of possible crossings is restricted by Lemma \ref{lemma:2by2} and depends on the placement of the crossings in the adjacent $2\times 2$ sub-mosaics and on the other adjacent blocks.


Similarly for the last row, there is at most one crossing in the first $1 \times 2$ block, and there are $(n-3)/2$ remaining $1 \times 2$ blocks. This gives a total of $n-3$ non-overlapping $2 \times 1$ and $1 \times 2$ blocks in the last column and row. At most half of these have two crossings, and the other half have at most one crossing. See the second mosaic in Figure \ref{fig:maximal-crossings} for an example. Adding all of the above crossings together, we get the desired number of crossings:
$$\frac{(n-1)^2}{2} + 1 + 1 + 2\left(\frac{n-3}{2}\right) + \frac{n-3}{2} = \frac{n^2+n-4}{2}.$$
\end{proof}

We note that, besides the layouts given in Figure \ref{fig:maximal-crossings}, there are other layouts that realize the maximum number of crossings in an $n$-mosaic when $n$ is even. Some examples are given in Figure \ref{fig:maximal-layouts}. However, in the case where $n$ is odd the only way to obtain an $n$-mosaic with the maximum number of crossings is to use a layout that is structured similarly to the second layout given in Figure \ref{fig:maximal-crossings}. Deviations from this pattern result in fewer crossings because of Lemma \ref{lemma:2by2} or in kinks that reduce the number of crossings when the unnecessary crossings are resolved.

\begin{figure}[ht]
  \centering
  \includegraphics[width=\linewidth]{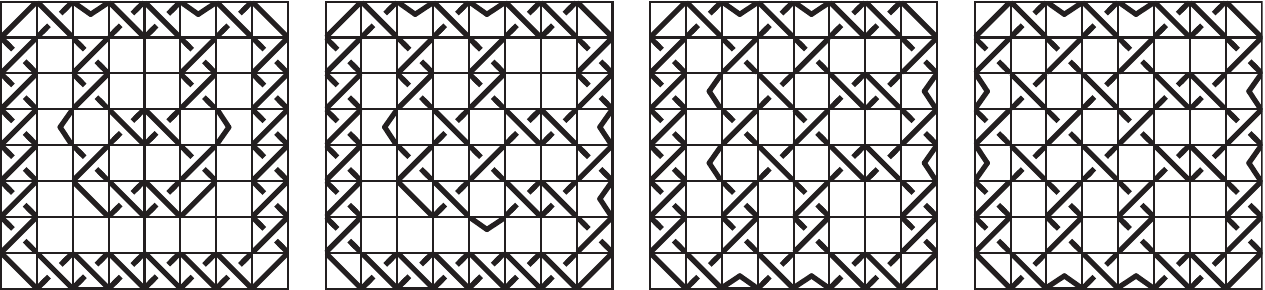}\\
  \caption{Examples of knot mosaics that realize the maximum number of crossings.}
  \label{fig:maximal-layouts}
\end{figure}

Although $\mcc(K)\leq m(K)$ for all prime knots $K$ with crossing number 8 or less, this is not the case for all prime knots. Now that we know the maximum number of crossing tiles in an $n$-mosaic when using the corner connection tiles, we can compare this to the maximum number of crossings allowed when using the traditional tiles. Howards and Kobin provided the latter in \cite{Howards}, showing that, for $n\geq 4$ the crossing number is bounded above by $(n-2)^2-(n-3)$ if $n$ is even and by $(n-2)^2-2$ if $n$ is odd. Comparing this to our bounds on the number of allowed crossings, $n=9$ is the first value for which the maximum number of crossings when using the corner connection tiles (43) is smaller than the maximum number of crossings when using traditional tiles (47), and this is where we find our counterexample.

\begin{theorem}\label{thm:counterexample}
  For sufficiently large knot mosaics, traditional tiles can lead to a smaller mosaic number than corner connection tiles do. That is, there exist knots $K$ such that $m(K) < \mcc(K)$.
\end{theorem}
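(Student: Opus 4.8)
The plan is to produce a single knot $K$ that fits on a traditional $9$-mosaic but carries too many crossings to fit on any corner connection mosaic of size $9$ or smaller; for such a $K$ we will have $m(K)\le 9<\mcc(K)$, which gives the asserted (and in fact strict) inequality. The bridge between the two tile systems is the elementary remark that in \emph{any} mosaic diagram of $K$ the number of crossing tiles is at least the crossing number $c(K)$, since a suitably connected mosaic is in particular a knot diagram and every diagram of $K$ uses at least $c(K)$ crossings. Consequently, if $K$ can be placed on a corner connection $n$-mosaic, then $c(K)$ is at most the maximum number of crossing tiles allowed on that mosaic.

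First I would record the relevant crossing bounds. By Theorem \ref{thm:max-cross}, the corner connection maximum $n^2/2$ (even) and $(n^2+n-4)/2$ (odd) is increasing in $n$, so every corner connection $n$-mosaic with $n\le 9$ admits at most $(9^2+9-4)/2=43$ crossing tiles, the maximum being attained at $n=9$. Hence any knot with $c(K)\ge 44$ satisfies $\mcc(K)\ge 10$. On the traditional side, the Howards--Kobin bound \cite{Howards} permits up to $(9-2)^2-2=47$ crossings on a traditional $9$-mosaic, leaving room to realize a knot whose crossing number lies between $44$ and $47$.

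The heart of the argument is to exhibit such a witness. I would construct an explicit traditional $9$-mosaic whose underlying diagram is a \emph{reduced alternating} knot diagram with at least $44$ crossings. By the first Tait conjecture (the Kauffman--Murasugi--Thistlethwaite theorem), a reduced alternating diagram is crossing-minimal, so for this $K$ one reads off $c(K)\ge 44$ directly from the tile count, with no separate argument that the crossings are irreducible. Since $K$ lives on a traditional $9$-mosaic we have $m(K)\le 9$, while $c(K)\ge 44$ forces $\mcc(K)\ge 10$ by the previous paragraph. Therefore $m(K)\le 9<10\le\mcc(K)$, establishing the theorem. (A known torus knot of crossing number at least $44$ with mosaic number at most $9$ could serve as an alternative witness, but the alternating construction is self-contained given Tait.)

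The main obstacle is the construction and verification of the witness mosaic itself: one must fill the $7\times 7$ interior of a traditional $9$-mosaic densely enough to reach at least $44$ crossings while simultaneously guaranteeing that the result is a single component (a knot rather than a link), that the over/under assignment is globally alternating, and that no crossing is nugatory, so that the diagram is genuinely reduced. The slack between the required $44$ and the Howards--Kobin maximum of $47$ is exactly what makes this feasible, since it leaves a few spare crossing positions to sacrifice in order to merge components or eliminate nugatory crossings without dropping below the threshold. Once one valid such mosaic is drawn and checked, the remainder of the proof is immediate.
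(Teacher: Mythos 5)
Your proposal is correct and follows essentially the same route as the paper: bound the crossings on a corner connection $9$-mosaic by $43$ via Theorem \ref{thm:max-cross}, exhibit a reduced alternating knot diagram on a traditional $9$-mosaic exceeding that bound, and invoke the Kauffman--Murasugi--Thistlethwaite theorem to certify its crossing number. The only piece you defer --- the explicit witness mosaic --- is exactly what the paper supplies in Figure \ref{fig:counterexample}, a $47$-crossing reduced alternating diagram, so your outline is complete once that figure is drawn and checked.
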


\begin{proof}
As a result of Theorem \ref{thm:max-cross}, the maximum number of crossings on a 9-mosaic using corner connection tiles is 43. In Figure \ref{fig:counterexample}, we see a 9-mosaic using traditional tiles and 47 crossing tiles. By the Kauffman-Murasugi-Thistlethwaite Theorem \cite{Kauffman}, the crossing number of the depicted knot is 47, since it is a reduced, alternating knot diagram. Using corner connection tiles, this same knot can only be obtained on a 10-mosaic or larger.
\end{proof}

\begin{figure}[ht]
  \centering
  \includegraphics[scale=0.7]{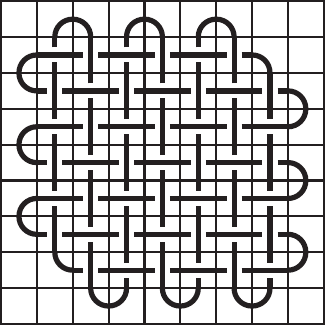} \\
  \caption{Counterexample showing that a knot made out of traditional tiles can fit on a smaller knot mosaic than one made with corner connection tiles.}
  \label{fig:counterexample}
\end{figure}

\section{Rectangular Mosaics}\label{section:rectangular}

There may be times when it is convenient to consider rectangular mosaics instead of square mosaics. An $(m,n)$-mosaic is a knot mosaic with $m$ rows and $n$ columns. For example, it may be more practical to use rectangular mosaics when working with pretzel links or braids. Moreover, the corner connection tiles allow for easier, often more efficient, depiction of the tangles.

\begin{figure}[ht]
  \centering
  \includegraphics{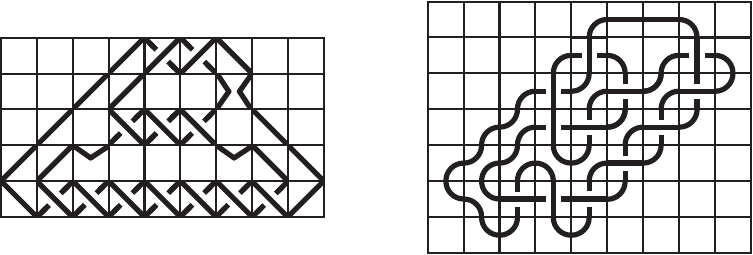}\\
  \caption{The Fintushel-Stern $(-2,3,7)$ pretzel knot on rectangular mosaics with corner connection tiles or with traditional tiles.}
  \label{fig:pretzel-knot}
\end{figure}

In Theorem \ref{thm:max-cross}, we gave bounds for the number of crossing tiles that can fit on a square mosaic using corner connection tiles. We can use a similar counting method to provide bounds for a rectangular mosaic.

\begin{theorem}
If $m\geq3$ and $n\geq3$, then the upper bound for the number of crossing tiles on an $(m,n)$-mosaic using corner connection tiles is:
\begin{enumerate}
  \item $mn/2$, if $m$ and $n$ are even;
  \item $(mn+n-4)/2$, if $m$ and $n$ are odd and $m\leq n$ or if $m$ is odd and $n$ is even.
\end{enumerate}
\end{theorem}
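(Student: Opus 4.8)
The plan is to follow the same strategy as the proof of Theorem~\ref{thm:max-cross}, using Lemma~\ref{lemma:2by2} as the fundamental counting tool together with the fact that the four corner tiles of a mosaic can never be crossings. I would split into the parity cases exactly as stated, peeling off boundary rows or columns until what remains is an even-by-even block that can be tiled by pairwise disjoint $2\times 2$ sub-mosaics.

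For part~(a), when both $m$ and $n$ are even, the mosaic partitions into $mn/4$ disjoint $2\times 2$ sub-mosaics, each containing at most two crossing tiles by Lemma~\ref{lemma:2by2}, so there are at most $mn/2$ crossings. For the sub-case of part~(b) in which $m$ is odd and $n$ is even, I would delete the last row: the remaining $(m-1)\times n$ region has both dimensions even and therefore contains at most $(m-1)n/2$ crossings by the same disjoint-block argument, while the deleted row has length $n$ with its two endpoints being genuine mosaic corners, hence at most $n-2$ crossings. Adding these gives $\tfrac{(m-1)n}{2}+(n-2)=\tfrac{mn+n-4}{2}$, as desired.

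The remaining sub-case, with $m$ and $n$ both odd and $m\le n$, is the crux. Here I would delete both the last row and the last column, leaving an even-by-even $(m-1)\times(n-1)$ block with at most $\tfrac{(m-1)(n-1)}{2}$ crossings. The deleted row has length $n$ with two corner endpoints, contributing at most $n-2$; the difficulty is the deleted column, which has length $m$ and only its two endpoints excluded by the corner rule, giving the naive bound $m-2$. To reach the stated total I must instead show this last column carries at most $\tfrac{m-1}{2}$ crossings, since $\tfrac{(m-1)(n-1)}{2}+(n-2)+\tfrac{m-1}{2}=\tfrac{mn+n-4}{2}$. This tighter bound cannot come from Lemma~\ref{lemma:2by2} applied to the column in isolation, since a single fully occupied column violates no $2\times 2$ constraint; it must instead come from the coupling between the last column and the maximally packed interior, by decomposing the column into $(m-1)/2$ disjoint $2\times 1$ boundary blocks and arguing, via the $2\times 2$ windows each block shares with the adjacent interior column, that each block can carry at most one crossing once the interior is saturated. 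This is precisely the ``at most half of the boundary blocks have two crossings'' phenomenon invoked in the proof of Theorem~\ref{thm:max-cross}, and I expect it to be the main obstacle. The hypothesis $m\le n$ is exactly what lets me assign the generous count $n-2$ to the longer boundary (a row of length $n$) and the tight count $\tfrac{m-1}{2}$ to the shorter boundary (a column of length $m$); transposing the roles would produce $\tfrac{mn+m-4}{2}$ instead, which is why the stated formula carries the larger dimension in its bonus term.
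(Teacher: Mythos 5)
Your proposal follows essentially the same route as the paper: the identical case split by parity, the decomposition into disjoint $2\times 2$ sub-mosaics bounded by Lemma \ref{lemma:2by2}, the peeling off of the last row and column, and the same use of $m\leq n$ to assign the generous count $n-2$ to the long boundary row and the tight count $(m-1)/2$ to the short boundary column. The one step you flag as the remaining obstacle --- that the saturated interior forces each of the last column's $2\times 1$ blocks beyond the first to carry at most one crossing --- is exactly the step the paper also asserts by appeal to the coupling argument from Theorem \ref{thm:max-cross} rather than proving in detail, so your account matches the paper's proof in both substance and level of rigor.
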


\begin{proof}
 The proof is almost identical to the proof of Theorem \ref{thm:max-cross}. In the case where $m$ and $n$ are both even, there are $mn/4$ $2 \times 2$ sub-mosaics, each with at most two crossings by Lemma \ref{lemma:2by2}, for a maximum of $mn/2$ crossings total.

 In the case when both $m$ and $n$ are odd, assuming $m \leq n$, there are $(m-1)(n-1)/4$ $2 \times 2$ sub-mosaics, each with at most two crossings. The last column has $(m-1)/2$ non-overlapping $2 \times 1$ blocks, and the last row has $(n-1)/2$ non-overlapping $1 \times 2$ blocks. The first of each of these can have at most one crossing. Since we are assuming $m \leq n$, we obtain the maximum number of crossings by assuming the remaining $(n-3)/2$ $1 \times 2$ blocks in the last row each have two crossings, forcing the remaining $(m-3)/2$ $2 \times 1$ blocks in the last column to have at most one crossing each.  There is also the bottom, right corner of the mosaic, which cannot be a crossing. Adding all of the above crossings together, we get the desired number of crossings:
$$\frac{(m-1)(n-1)}{2} + 1 + 1 + 2\left(\frac{n-3}{2}\right) + \frac{m-3}{2} = \frac{mn+n-4}{2}.$$

 In the case where $m$ is odd and $n$ is even, we ignore the last row, and the remaining $(m-1)\times n$ region has $(m-1)n/4$ non-overlapping $2\times 2$ sub-mosaics, each with at most two crossings. The last row has at most $n-2$ crossings since the two corner tiles cannot be crossings. Thus the total number of crossings is $(m-1)n/2 + n-2$, which simplifies to $(mn+n-4)/2$.
\end{proof}

\section{Further Questions}\label{section:questions}

There are plenty of open questions related to these corner connection tiles, and we provide a few of them here. The ones that come to our mind first are related to efficiency.

\begin{question}
  What is the simplest prime knot for which the mosaic number is smaller using traditional tiles than it is when using corner connection tiles?
\end{question}

\begin{question}
  Is it always true that $\tcc(K)\leq t(K)$? If not, what is the simplest prime knot for which the tile number is smaller using traditional tiles than it is when using corner connection tiles?
\end{question}

\begin{question}
  What is the corner connection mosaic number and corner connection tile number for each prime knot with 10 or fewer crossings?
\end{question}

Using traditional tiles it is known that there are infinitely many knots whose crossing number cannot be realized on a minimal mosaic \cite{Ludwig}, the simplest being $6_1$, $7_3$, $8_1$, $8_3$, $8_6$, $8_7$, $8_8$, and $8_9$. However, with corner connection tiles, there are no such knots with crossing number 8 or less.

\begin{question}
  Are there any knots where the crossing number cannot be realized when the corner connection mosaic number is?
\end{question}

\begin{question}
  Are there any knots where the crossing number cannot be realized when the corner connection tile number is?
\end{question}

Using traditional tiles, bounds for the tile number on an $n$-mosaic were determined in \cite{Heap1}, but we have yet to determine the bounds when using corner connection tiles.

\begin{question}
  What are the lower and upper bounds for corner connection tile number in terms of the size of the mosaic?
\end{question}

When using traditional tiles, there is a collection of sub-mosaic planar isotopy moves given in \cite{Lom-Kauff}, where it is claimed that this set of mosaic planar isotopy moves is complete. That is, every mosaic planar isotopy move is a composition of a finite sequence of the moves in the given collection.

\begin{question}
  What is the complete generating set of mosaic planar isotopy moves for corner connection tiles?
\end{question}

There are also some questions that remain unanswered for both the traditional tiles and the corner connection tiles. Most of the known mosaic numbers and tile numbers are for prime knots. It would be interesting to expand what is known for composite knots and links.

\begin{question}
  Using either set of square tiles, what can be said about the mosaic number and tile number for composite knots? In particular, for knots $K_1$ and $K_2$, is there a relationship between these mosaic invariants for $K_1$, $K_2$, and $K_1 \# K_2$?
\end{question}

\begin{question}
  Using either set of square tiles, what is the mosaic number and tile number for each multi-component link with 10 or fewer crossings?
\end{question}

Although the complete set of prime knots that can be placed on an $n$-mosaic, using traditional tiles, for each $n \leq 6$ has been determined, there is much more work to be done.

\begin{question}
  Using either set of square tiles, what is the complete set of prime knots, composite knots, and multi-component links can be placed on an $n$-mosaic for each $n \geq 3$?
\end{question}

%


\bibliographystyle{amsplain}
\bibliography{bibliography}

\providecommand{\bysame}{\leavevmode\hbox to3em{\hrulefill}\thinspace}
\providecommand{\MR}{\relax\ifhmode\unskip\space\fi MR }
\providecommand{\MRhref}[2]{%
  \href{http://www.ams.org/mathscinet-getitem?mr=#1}{#2}
}
\providecommand{\href}[2]{#2}
\begin{thebibliography}{10}

\bibitem{Hex-tiles-BushComminsGomezMcLoud-Mann}
J.~Bush, P.~Commins, T.~Gomez, and J.~McLoud-Mann, \emph{Hexagonal mosaic links
  generated by saturation}, Journal of Knot Theory and Its Ramifications
  \textbf{29} (2020), no.~12, 2050085.

\bibitem{Heap4}
Aaron Heap, Douglas Baldwin, James Canning, and Greg Vinal, \emph{Tabulating
  knot mosaics: Crossing number 10 or less}, Involve (to appear).

\bibitem{Heap1}
Aaron Heap and Doug Knowles, \emph{Tile number and space-efficient knot
  mosaics}, J. Knot Theory Ramifications \textbf{27} (2018), no.~06, 1850041.

\bibitem{Heap2}
\bysame, \emph{Space-efficient knot mosaics for prime knots with mosaic number
  6}, Involve \textbf{12} (2019), no.~5, 767--789.

\bibitem{Heap3}
Aaron Heap and Natalie LaCourt, \emph{Space-efficient prime knot 7-mosaics},
  Symmetry \textbf{12} (2020), no.~4, 576.

\bibitem{Howards}
Hugh Howards and Andrew Kobin, \emph{Crossing number bounds in knot mosaics},
  J. Knot Theory Ramifications \textbf{27} (2018), no.~10.

\bibitem{Hex-tiles-Howards}
Hugh Howards, Jiong Li, and Xiaotian Liu, \emph{An infinite family of knots
  whose hexagonal mosaic number is only realized in non-reduced projections},
  arXiv preprint arXiv:1912.03697 (2019).

\bibitem{Kauffman}
Louis~H. Kauffman, \emph{New invariants in the theory of knots}, The American
  Mathematical Monthly \textbf{95} (1988), no.~3, 195--242.

\bibitem{Kuriya}
Takahito Kuriya and Omar Shehab, \emph{The {L}omonaco-{K}auffman conjecture},
  J. Knot Theory Ramifications \textbf{23} (2014), no.~1, 20 pp.

\bibitem{Lee2}
Hwa~Jeong Lee, Lewis~D. Ludwig, Joseph~S. Paat, and Amanda Peiffer, \emph{Knot
  mosaic tabulation}, Involve \textbf{11} (2018), no.~1, 13--26.

\bibitem{Lom-Kauff}
Samuel~J. Lomonaco and Louis~H. Kauffman, \emph{Quantum knots and mosaics},
  Quantum Inf. Process. \textbf{7} (2008), no.~2-3, 85--115.

\bibitem{Ludwig}
Lewis~D. Ludwig, Erica~L. Evans, and Joseph~S. Paat, \emph{An infinite family
  of knots whose mosaic number is realized in non-reduced projections}, J. Knot
  Theory Ramifications \textbf{22} (2013), no.~7, 11 pp.

\end{thebibliography}
\addcontentsline{toc}{section}{\refname}

\end{document}